\newcommand{\SE}{{\mathcal{E}}}
\newcommand{\SH}{{\mathcal{H}}}
\newcommand{\SM}{{\mathcal{M}}}
\newcommand{\SO}{{\mathcal{O}}}
\newcommand{\SQ}{{\mathcal{Q}}}
\newcommand{\cS}{{\mathcal{S}}}
\newcommand{\ST}{{\mathcal{T}}}
\newcommand{\SW}{{\mathcal{W}}}
\newcommand{\SY}{{\mathcal{Y}}}
\newcommand{\PP}{\mathbb{P}}
\newcommand{\ZZ}{\mathbb{Z}}
\newcommand{\NN}{\mathbb{N}}
\newcommand{\CC}{\mathbb{C}}
\newcommand{\codim}{\operatorname{codim}}
\newcommand{\Pic}{\operatorname{Pic}}
\newcommand{\inj}{\hookrightarrow}
\newcommand{\too}{\longrightarrow}
\newcommand{\rk}{\operatorname{rk}}
\newcommand{\wt}{\widetilde}
\newcommand{\GL}{{\operatorname{GL}}}
\newcommand{\GO}{\operatorname{GO}}
\newcommand{\orth}{\operatorname{O}}
\newcommand{\PGL}{{\operatorname{PGL}}}
\newcommand{\fq}{\mathfrak{q}}
\newcommand{\fz}{\mathfrak{z}}
\newcommand{\fgo}{\mathfrak{go}}
\newcommand{\fso}{\mathfrak{so}}
\newcommand{\fgl}{\mathfrak{gl}}
\newcommand{\udots}{\mathinner{\mskip1mu\raise1pt\vbox{\kern7pt\hbox{.}}
\mskip2mu\raise4pt\hbox{.}\mskip2mu\raise7pt\hbox{.}\mskip1mu}}
\newtheorem{proposition}{Proposition}[section]
\newtheorem{theorem}[proposition]{Theorem}
\newtheorem{definition}[proposition]{Definition}
\newtheorem{lemma}[proposition]{Lemma}
\newtheorem{corollary}[proposition]{Corollary}
\numberwithin{equation}{section}
\begin{document}

\title[Hecke transformation for orthogonal bundles]{Hecke
transformation for orthogonal bundles and stability of Picard bundles}

\author[I. Biswas]{Indranil Biswas}

\address{School of Mathematics, Tata Institute of Fundamental
  Research, Homi Bhabha Road, Mumbai 400005, India}

\email{indranil@math.tifr.res.in}

\author[T.L. G\'omez]{Tom\'as L. G\'omez}

\address{Instituto de Ciencias Matem\'aticas (CSIC-UAM-UC3M-UCM), 
Nicolas Cabrera 13, Campus Cantoblanco UAM, 28049 Madrid;
and
Facultad de Ciencias Matem\'aticas, 
Universidad Complutense de Madrid, 28040 Madrid, Spain}

\email{tomas.gomez@icmat.es}

\date{}

\subjclass[2000]{14F05, 14D20}

\keywords{Orthogonal bundle, Hecke transformation, Picard bundle}

\thanks{Research supported by the Spanish
Ministerio de Educaci\'on y Ciencia [MTM2007-63582]}

\begin{abstract}
We define Hecke transformation for orthogonal bundles over a
compact Riemann surface. Using the cycles on a moduli space of
orthogonal bundles given by Hecke transformations, we prove that
the projectivized Picard bundle on the moduli space is stable.
\end{abstract}

\maketitle

\section{Introduction}

Given a holomorphic vector bundle $F$ over a compact Riemann surface
$X$, and a subspace $S_x\, \subset\, F_{x}$ in the fiber over a
point $x$,
the Hecke transformation produces a new vector bundle $E$ on $X$
\cite{Ty}, \cite{NR1}.
The vector bundle $E$ is the kernel of the natural quotient map
$F\,\too\, F_{x}/S_x$. Hecke transformation is a very useful tool
to study the moduli space. For instance,
they are used in computing cohomologies of coherent sheaves on a
moduli space of vector bundles \cite{NR1}. They are also used in proving
stability of various naturally associated bundles on a moduli space
\cite{BBGN}.

When $S_x$ varies among all subspaces of $F_x$ (the fiber of $F$ at
$x$), with
$x$ fixed, we get a family
of vector bundles. Under suitable conditions for $F$, these
Hecke transforms are stable vector bundles, so we obtain a
morphism from the Grassmannian associated
to $F_x$ to the moduli space of vector bundles.
The image of this morphism is called a Hecke cycle.

An orthogonal bundle is a vector bundle $F$ together with a
homomorphism $\psi:F\otimes F\too M$, where $M$ is a line bundle,
such that $\psi$ is symmetric and non-degenerate at every fiber.
Equivalently, an orthogonal bundle can be thought of as a principal
$\GO(r,\CC)$-bundle.
Our aim here is systematically to construct Hecke transformations
of orthogonal bundles. If $F$ is an orthogonal bundle over $X$
of rank $2n$, and $S_x\, \subset\, F_{x}$ is an isotropic subspace
of dimension $n$, then the vector bundle $E\too X$
defined  by the kernel of the homomorphism
$F\,\too\, F_{x}/S_x$ has an induced orthogonal structure. If the
orthogonal form on $F$ takes values in a line bundle $M$, then the
orthogonal form on $E$ takes values in
$M\otimes {\mathcal O}_X(-x)$. Summing up, we start with a principal 
$\GO(2n,\CC)$--bundle $F$ and a Lagrangian subspace of $F_x$,
and we obtain another $\GO(2n,\CC)$--bundle. If we let 
$S_x$ vary, we will obtain a family of $\GO(2n,\CC)$--bundles.
Under suitable conditions on $(F,\psi)$, all these bundles are
stable, hence we obtain a morphism to the moduli space of stable
orthogonal bundles, whose image is called a \emph{Hecke cycle}.

For odd ranks, we consider vector bundles $F\too X$ of rank
$2n+1$ equipped with a symmetric bilinear form $\psi$ which is 
nondegenerate on $X\setminus \{x\}$, and the annihilator $l_x$
of $F_{x}$ is of dimension one. For any isotropic subspace
$\wt S_x\, \subset\, F_{x}$ of dimension $n+1$ (or, equivalently,
for any isotropic subspace $S_x\, \subset \, F_x/l_x$ of 
dimension $n$), we construct an
orthogonal bundle.
As in the even case,
we can define a morphism to the moduli space whose image is called
a \emph{Hecke cycle}.
Note that in the odd case, in order to obtain a principal
bundle, we start with an object which is not a principal 
bundle (the bilinear form 
on the fiber over $x$ is degenerate). From
this point of view, a Hecke transformation, rather
than a transformation between principal bundles, is better
understood as a technique for producing interesting cycles in the 
moduli space.

As an application, we prove that the
projectivized Picard bundle (see Section \ref{picard}
for the definition) on a moduli space
of orthogonal bundles is stable (Theorem \ref{thm.}).

In \cite{BG} we have considered symplectic Hecke transformations.
At the end of this article we comment on the differences
between the symplectic and orthogonal case.

\section{Preliminaries}

We fix a nondegenerate symmetric bilinear form $B$ on $\CC^r$,
$r\,\geq\, 3$.
The symmetric matrix representing $B$ will also be denoted
by $B$. Define the \emph{general orthogonal} group $\GO(r,\CC)$ to 
be the group of all conformally orthogonal
transformations, meaning
\begin{equation}
  \label{eq:go}
  \GO(r,\CC)=\{A\in \GL(r,\CC) \, : \, A^tBA=cB 
\;\text{for some}\; c\in \CC^*\}\, .
\end{equation}
This group is an extension of $\CC^*$ by the orthogonal
group $\orth(r,\CC)$
\begin{equation}
  \label{eq:goext}
1 \too \orth(r,\CC) \too \GO(r,\CC) \stackrel{p}{\too} \CC^* \too 1\, ,
\end{equation}
where $p(A)$ is the constant $c$ in \eqref{eq:go}. It follows that
$$
\det(A)^2=p(A)^r\, .
$$

Let $X$ be a compact connected Riemann surface.
An \emph{orthogonal bundle} on $X$ is a pair
of the form $(E,\varphi)$, where 
\begin{itemize}
\item $E\longrightarrow X$ is a holomorphic vector bundle, and

\item $\varphi$ is a symmetric and nondegenerate 
holomorphic homomorphism
$$
\varphi:E\otimes E \too L\, ,
$$
where $L\longrightarrow X$ is a holomorphic line bundle.
\end{itemize}
The homomorphism $\varphi$ induces an isomorphism 
$E\too E^\vee\otimes L$, and this in turn produces an isomorphism
\begin{equation}\label{e1}
\det(E)^2\, \stackrel{\sim}{\longrightarrow}\, L^r \; .
\end{equation}
An \emph{isomorphism} of orthogonal bundles
$$
(E,\varphi) \too (E',\varphi')
$$
is a pair of holomorphic
isomorphisms $(\alpha:E\stackrel{\sim}{\too} E',\, \beta:L
\stackrel{\sim}{\too} L')$
such that the following diagram is commutative
$$
\xymatrix{
{E\otimes E} \ar[r]^-{\varphi} \ar[d]_{\alpha\otimes\alpha} 
& {L}\ar[d]^{\beta}\\
{E'\otimes E'} \ar[r]^-{\varphi'} & {L'}
}
$$
There is a canonical bijection between the isomorphism classes of 
principal $\GO(r,\CC)$--bundles and orthogonal bundles of rank $r$.

Let $(E,\varphi:E\otimes E\too L)$ be an orthogonal bundle.
If $F\subset E$ is a subsheaf, we define $F^\perp$ to be the 
kernel of the composition
$$
E \stackrel{\varphi}\too E^\vee\otimes L \too 
F^\vee \otimes L \; .
$$
In other words, $F^\vee$ is the annihilator of $F$.

\begin{lemma}
\label{short}
Let $(E,\varphi:E\otimes E \too L)$ be an orthogonal bundle
on $X$, and let $F\subset E$ be a subbundle.
\begin{enumerate}

\item \label{short1}
There is a short exact sequence on $X$
$$
0 \too F^\perp \too E\,\stackrel{\varphi}{\cong}\, E^\vee\otimes L \too 
F^\vee\otimes L \too 0\, ,
$$
hence $F^\perp$ is also a subbundle, so $\rk F^\perp =\rk E -\rk F $, 
and $\deg F^\perp =\deg F +l(\frac{r}{2}-\rk  F )$, where $l$ is
the degree of $L$.

\item \label{short3}
There is a short exact sequence
\begin{equation}
\label{cap}
0 \too F\cap F^\perp \too F\oplus F^\perp \too F+F^\perp \too 0\, .
\end{equation}
\item \label{short4}
We have an inclusion $F+F^\perp \subset (F\bigcap F^\perp)^\perp$, in 
particular,
$\rk (F+F^\perp) \leq \rk (F\bigcap F^\perp)^\perp$.
\end{enumerate}
\end{lemma}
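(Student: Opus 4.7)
The plan for part (1) is to use the orthogonal isomorphism $E \cong E^\vee \otimes L$ derived in the lead-up to \eqref{e1}, and compose it with the surjection $E^\vee \otimes L \surj F^\vee \otimes L$ obtained by dualising the inclusion $F \hookrightarrow E$ and tensoring with $L$. The key point is that because $F$ is a subbundle, the quotient $E/F$ is locally free, so the dual map $E^\vee \surj F^\vee$ really is surjective. The kernel of the composite $E \to F^\vee \otimes L$ is then $F^\perp$ by the very definition of $F^\perp$, which gives the asserted short exact sequence. Since $F^\perp$ appears as the kernel of a surjective map between locally free sheaves, it is automatically a subbundle of rank $r - \rk F$; additivity of degrees in the sequence, together with $\deg E = rl/2$ forced by \eqref{e1}, then yields $\deg F^\perp = \deg E + \deg F - l\,\rk F = \deg F + l(r/2 - \rk F)$.

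Part (2) is essentially formal: for any pair of subsheaves $A, B \subset E$, the sequence $0 \to A \cap B \to A \oplus B \to A + B \to 0$ with maps $x \mapsto (x,x)$ and $(a, b) \mapsto a - b$ is exact by the very definitions of sum and intersection inside $E$, so specialising to $A = F$, $B = F^\perp$ suffices. For part (3), the inclusion $F + F^\perp \subset (F \cap F^\perp)^\perp$ follows directly from the definition of $\perp$: any local section of $F \cap F^\perp$ pairs to zero against every section of $F$ (since it lies in $F^\perp$) and against every section of $F^\perp$ (since it lies in $F$). Hence both $F$ and $F^\perp$ are contained in $(F \cap F^\perp)^\perp$, and therefore so is their sum. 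The rank inequality then follows tautologically from the inclusion of coherent subsheaves of $E$.

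I do not anticipate a genuine obstacle; the argument is essentially linear algebra transposed to locally free sheaves. The only subtlety worth flagging is the distinction between subsheaf and subbundle. The operation $(-)^\perp$, being a kernel of a map of bundles, always produces a subbundle, but $F + F^\perp$ and $F \cap F^\perp$ are in general only coherent subsheaves of $E$. This is harmless for (2) and (3), which work verbatim for coherent subsheaves, while (1) uses the subbundle hypothesis on $F$ in precisely one place, to guarantee that $E^\vee \surj F^\vee$ really is a surjection onto a locally free sheaf.
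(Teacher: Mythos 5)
Your proof is correct and follows essentially the same route as the paper: part (1) via the composite $E\cong E^\vee\otimes L\surj F^\vee\otimes L$ using that $F$ is a subbundle, part (2) by the standard kernel--sum sequence, and part (3) by unwinding the definition of $\perp$ together with the symmetry of $\varphi$ (the paper phrases this via the contravariance of $(-)^\perp$ and $F=(F^\perp)^\perp$, but the content is identical). Your flag about the subsheaf/subbundle distinction is exactly the point the paper also relies on.
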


\begin{proof}
To prove (\ref{short1}), note that from \eqref{e1}
it follows that $\deg E \,=\, rl/2$. Also,
$F^\vee$ is a quotient bundle of $E^\vee$ because
$F$ is a subbundle of $E$. Now (\ref{short1})
follows from the definition of $F^\perp$.

Part (\ref{short3}) is easy to check.

For (\ref{short4}), note that if $F_1$ is a subsheaf of $F_2$,
then there is a natural inclusion $F_2^\perp\,\hookrightarrow\, 
F_1^\perp$. Since $F\bigcap F^\perp\subset F$ and
$F\bigcap F^\perp\subset F^\perp$, we have
$F^\perp \,\subset\, (F^\perp\bigcap F)^\perp$, and
$F\,=\, (F^\perp)^\perp \,\subset\, (F^\perp\bigcap F)^\perp$.
Hence $F+F^\perp\,\subset\, (F^\perp\bigcap F)^\perp$.
\end{proof}

A principal $\GO(r,\CC)$--bundle over a smooth complex
projective curve $X$ is
called \emph{stable} (respectively, 
\emph{semistable}) if for every reduction $\sigma\, :\, X
\,\longrightarrow\, P/Q$ to a (proper)
maximal parabolic subgroup $Q\subset \GO(r,\CC)$,
$$
\deg(\sigma^* T_{\rm rel})\, >\, 0
~\,~\,\text{~(respectively,~}\, \deg(\sigma^* T_{\rm rel})\,
\geq \, 0{\rm )}\, ,
$$
where $T_{\rm rel}\too P/Q$ is the relative tangent bundle
for the projection $P/Q\,\longrightarrow\, X$
(see \cite[page 129, Definition 1.1]{Ra1}
and \cite[page 131, Lemma 2.1]{Ra1}).
In terms of orthogonal bundles, this condition is equivalent to the 
condition
that for all isotropic subbundles $0\not= E'\subset E$, 
$$
\frac{\deg (E') }{\rk(E')} \, <\,  \frac{\deg (E)}{\rk(E)}
~\,~\,\text{~(respectively,~}\, \frac{\deg (E')}{\rk(E')} \, \leq\,
\frac{\deg (E)}{\rk(E)}{\rm )}\, ;
$$
we recall that $E'$ is
called an \textit{isotropic} subbundle of $E$ if the restriction of
$\varphi$ to $E'\otimes E'\, \subset\, E\otimes E$ is identically zero.

See \cite{Ra2,Ra3} for the construction of
moduli spaces of semistable principal $\GO(r,\CC)$--bundles.
We denote by $\SM_L$ the moduli space of stable orthogonal bundles with
values in the line bundle $L$.

\begin{lemma}
\label{Esemistable}
If $(E,\varphi)$ is a semistable orthogonal bundle on $X$, then
the underlying vector bundle $E$ is semistable.
\end{lemma}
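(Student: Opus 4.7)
The plan is to argue by contradiction. Suppose the underlying vector bundle $E$ is not semistable, and let $F \subset E$ be the maximal destabilizing subbundle (the first term of the Harder--Narasimhan filtration). Then $F$ is itself semistable with slope $\mu(F) > \mu(E)$. Since $\deg(E) = rl/2$ by \eqref{e1}, we have $\mu(E) = l/2$, so $\mu(F) > l/2$.

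Consider the homomorphism obtained by composing the inclusion $F \hookrightarrow E$, the orthogonal isomorphism $\varphi : E \xrightarrow{\sim} E^\vee \otimes L$, and the natural projection $E^\vee \otimes L \to F^\vee \otimes L$:
$$
\beta \, : \, F \,\longrightarrow\, F^\vee \otimes L .
$$
By construction, $\beta$ vanishes precisely on $F \cap F^\perp$, so $\beta = 0$ if and only if $F$ is isotropic. In that case the semistability of the orthogonal bundle $(E,\varphi)$ directly yields $\mu(F) \leq \mu(E) = l/2$, contradicting $\mu(F) > l/2$.

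Suppose therefore that $\beta \neq 0$, and let $I = \operatorname{im}(\beta) \subset F^\vee \otimes L$. Since $F$ is semistable, every nonzero quotient of $F$ has slope at least $\mu(F)$, so $\mu(I) \geq \mu(F)$. On the other hand $F^\vee \otimes L$ is semistable (dualization and tensoring with a line bundle preserve semistability) with slope $l - \mu(F)$, so every nonzero subsheaf satisfies $\mu(I) \leq l - \mu(F)$. Combining these two inequalities gives $\mu(F) \leq l - \mu(F)$, that is $\mu(F) \leq l/2$, again a contradiction.

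The only real content to verify is the two slope estimates on $I$, both of which are immediate consequences of semistability of $F$ and of $F^\vee \otimes L$; the main conceptual point is the observation that the orthogonal form forces $F$ either to be isotropic (handled directly by orthogonal semistability) or to admit a nonzero map into $F^\vee \otimes L$, which is incompatible with $F$ being strictly slope-dominant. No obstacle beyond this dichotomy is expected.
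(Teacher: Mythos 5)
Your proof is correct, but it takes a genuinely different route from the paper. The paper disposes of the lemma in one line by invoking the Ramanan--Ramanathan theorem (\cite[Theorem 3.18]{RR}) on semistable reduction of structure group, using only that the inclusion $\GO(r,\CC)\hookrightarrow \GL(r,\CC)$ carries the center into the center. You instead give a self-contained elementary argument: take the maximal destabilizing subbundle $F$, observe that the kernel of $\beta: F\to F^\vee\otimes L$ is $F\cap F^\perp$, and split into the isotropic case (handled directly by the orthogonal semistability condition in its isotropic-subbundle formulation) and the case $\beta\neq 0$ (where semistability of $F$, applied to the quotient $\operatorname{im}(\beta)$ and to the semistable bundle $F^\vee\otimes L$ of slope $l-\mu(F)$, forces $\mu(F)\leq l/2=\mu(E)$). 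All the individual steps check out: the maximal destabilizing subbundle is semistable and saturated, $\operatorname{im}(\beta)$ is torsion-free hence of positive rank when $\beta\neq 0$, and the two slope estimates are the standard ones. What each approach buys: the paper's proof is shorter and generalizes immediately to any reductive group mapping center to center, while yours avoids citing a nontrivial external theorem and makes visible exactly how the orthogonal form pins the slope of a destabilizing subbundle at $l/2$; it is the kind of argument one would want if a quantitative refinement (e.g.\ for $(k,l)$--stability) were needed later.
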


\begin{proof}
The natural inclusion of $\GO(r,\CC)$ into $\text{GL}(r,\CC)$
takes the center of $\GO(r,\CC)$ into the center of
$\text{GL}(r,\CC)$. Therefore, the lemma follows from
\cite[p. 285, Theorem 3.18]{RR}.
\end{proof}

\section{Hecke transformation for orthogonal bundles}

Let $X$ be a compact connected Riemann surface of genus
$g\,=\,g(X)\,\geq\, 2$. Fix a point $x\,\in\, X$.

\begin{proposition}
\label{prop:even1}
  Let $(F,\psi: F\otimes F\too M)$ be an orthogonal bundle
over $X$ of rank $r=2n$. Let $S_x\subset F_x$
be an isotropic subspace of dimension $n$. Set $Q_x=F_x/S_x$,
and let
\begin{equation}
  \label{eq:even1}
  0 \too E  \too F \too Q_x \too 0
\end{equation}
be the short exact sequence,
where $F\too F_x\too Q_x$ is the natural projection. Then
$E$ inherits a natural orthogonal structure
$\varphi:E\otimes E \too L$, where $L=M(-x):=
M\otimes_{{\mathcal O}_X} {\mathcal O}_X(-x)$.
\end{proposition}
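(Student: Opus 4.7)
The plan is to construct $\varphi : E \otimes E \to M(-x)$ as the unique map making the square
\[
\begin{CD}
E\otimes E @>>> F\otimes F \\
@V{\varphi}VV @VV{\psi}V \\
M(-x) @>>> M
\end{CD}
\]
commute, the horizontals being the canonical inclusions coming from \eqref{eq:even1} and from $\SO_X(-x) \hookrightarrow \SO_X$, and then to verify that $\varphi$ is symmetric and pointwise nondegenerate.

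For existence of the lift, I would observe that $M(-x)$ is the kernel of the canonical surjection $M \twoheadrightarrow M_x$ onto the skyscraper quotient at $x$, so $\varphi$ exists (and is unique) iff the composition $E \otimes E \hookrightarrow F \otimes F \xrightarrow{\psi} M \twoheadrightarrow M_x$ vanishes. This is a condition on the single linear map $E_x \otimes E_x \to M_x$. Tensoring the short exact sequence \eqref{eq:even1} with $k(x)$ over $\SO_X$ shows that, since $Q_x$ is a skyscraper of length $n$, the map $E_x \to F_x$ drops rank by $n$ and its image is the kernel of $F_x \to Q_x$, namely $S_x$. Isotropy of $S_x$ then kills the composition $E_x \otimes E_x \twoheadrightarrow S_x \otimes S_x \to M_x$, producing $\varphi$. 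Symmetry of $\varphi$ is then inherited from that of $\psi$, because $M(-x) \hookrightarrow M$ is injective.

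The main work is fiberwise nondegeneracy. Off $x$, the inclusions $E \hookrightarrow F$ and $M(-x) \hookrightarrow M$ are isomorphisms, so $\varphi$ agrees with $\psi$ and nondegeneracy is inherited. At $x$, I would compute $\varphi_x$ in a local frame: fix a uniformizer $t$, pick a vector-space complement $S'_x$ to $S_x$ in $F_x$, and lift bases to local sections $s_1,\ldots,s_n, s'_1,\ldots,s'_n$ of $F$ near $x$. Then $s_1,\ldots,s_n,\, ts'_1,\ldots,ts'_n$ is a local frame of $E$, and with respect to this frame and a natural generator of $M(-x)$ the matrix of $\varphi_x$ takes the block form
\[
\varphi_x \;=\; \begin{pmatrix} * & A \\ A^T & 0 \end{pmatrix},
\]
the lower-right block vanishing because $\psi(ts'_i, ts'_j) = t^2\psi(s'_i, s'_j)$ carries an extra factor of $t$, and the off-diagonal block $A = \bigl(\psi_x(s_i, s'_j)\bigr)$ being invertible because $S_x$ is Lagrangian (its $\psi_x$-orthogonal in $F_x$ is itself, so $\psi_x$ induces a perfect pairing $S_x \otimes S'_x \to M_x$). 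Therefore $\det \varphi_x = \pm(\det A)^2 \neq 0$. I expect the main obstacle to be careful bookkeeping in this local calculation, particularly keeping track of the generator of $M(-x)$ when reading off $\varphi_x$; once the matrix is in block form, the Lagrangian pairing immediately delivers nondegeneracy.
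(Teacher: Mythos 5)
Your proposal is correct and follows essentially the same route as the paper: both verify the statement by a local computation in a frame of $F$ adapted to $S_x$, observing that $E$ has local frame $s_1,\dots,s_n,ts'_1,\dots,ts'_n$, that the induced form therefore lands in $M(-x)$, and that the form divided by $t$ is still nondegenerate at $x$. The only (harmless) difference is that the paper first normalizes $\psi$ to the constant standard antidiagonal form via an \'etale trivialization, whereas you keep $\psi$ general and extract nondegeneracy from the invertibility of the off-diagonal block $A$, which is exactly the perfect pairing $S_x\otimes(F_x/S_x)\to M_x$ coming from $S_x^\perp=S_x$.
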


\begin{proof}
Choose a local \'etale trivialization of $(F,\psi)$ around $x\in X$
such that $\psi$ is of the form
$$
\psi=
\left(
  \begin{matrix}
    0 & & 1\\
      & \udots & \\
   1 &  & 0 \\
  \end{matrix}
\right)
$$
(so $\psi_{i,j}=0$ if $i+j\not= 2n$, and $\psi_{i,2n-i}=1$)
and $S_x$ is defined by the first $n$ vectors in the basis.
The homomorphism $E\too F$ is then locally defined by
the matrix
$$
\left(
  \begin{matrix}
   \mathds{1}_n  & \\
      & t\mathds{1}_n \\
  \end{matrix}
\right)
$$
(i.e., the diagonal matrix with the first $n$ entries equal to
$1$ and the last $n$ entries equal to $t$), 
where $t$ is a local parameter at $x\in X$. Therefore,
the composition $E\otimes E \too F\otimes F\too M$
is 
$$
\left(
  \begin{matrix}
    0 & & t\\
      & \udots & \\
   t &  & 0 \\
  \end{matrix}
\right)
$$
so it vanishes at $x\in X$. Therefore, the homomorphism
$E\otimes E\too M$ factors through
$L=M(-x)$, and then $\varphi:E\otimes E\too L$ is of the form
$$
\left(
  \begin{matrix}
    0 & & 1\\
      & \udots & \\
   1 &  & 0 \\
  \end{matrix}
\right)\, .
$$
This completes the proof.
\end{proof}

Let $(E,\varphi:E\otimes E\too L)$ be an orthogonal bundle
over $X$. Let $\widehat{\varphi} : E\longrightarrow\,
E^\vee \otimes L$ be the isomorphism given by $\varphi$.
Define
\begin{equation}\label{oi}
\varphi^{-1}\, :=\, (\widehat{\varphi}^*)^{-1}\, ,
\end{equation}
which produces a homomorphism
${\varphi}^{-1} :E^\vee \otimes E^\vee\too L^\vee$.
Note that
$(E^\vee, {\varphi}^{-1} :E^\vee \otimes E^\vee\too L^\vee)$
is an orthogonal bundle.

\begin{proposition}
\label{prop:even2}
  Let $(E,\varphi:E\otimes E\too L)$ be an orthogonal bundle
over $X$ of rank $r=2n$. Let $W$ be 
an isotropic subspace of dimension $n$ of $E_x^\vee$.
Let $F^\vee$ be defined by the following short exact
sequence
\begin{equation}
  \label{eq:even2}
0\,\too\, F^\vee \,\too\,  E^\vee \,\too\, E_x^\vee/W
\,\too\, 0\, .
\end{equation}
Then the orthogonal form ${\varphi}^{-1}$ on $E^\vee$ (see
\eqref{oi}) restricts to an orthogonal form
$$
{\varphi}^{-1} \, :\, F^\vee\otimes F^\vee\,\too\,
L^\vee(-x)
$$
on $F^\vee$.
\end{proposition}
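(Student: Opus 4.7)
The plan is to deduce this statement directly from Proposition~\ref{prop:even1} applied to the dual orthogonal bundle. The first step is to verify that $(E^\vee,\varphi^{-1})$ is itself an orthogonal bundle of rank $2n$ with values in the line bundle $L^\vee$. Symmetry and nondegeneracy of $\varphi^{-1}$ follow from its definition in \eqref{oi} as the inverse of the dual of $\widehat{\varphi}$, together with the corresponding properties of $\varphi$; the associated isomorphism is the dual $\widehat{\varphi}^{-1} : E^\vee \too E\otimes L^\vee = (E^\vee)^\vee\otimes L^\vee$ of $\widehat{\varphi}$.

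The second step is to apply Proposition~\ref{prop:even1} to the triple $(E^\vee,\varphi^{-1},W)$. By hypothesis, $W\subset E_x^\vee$ is an $n$-dimensional isotropic subspace with respect to $\varphi^{-1}$, which is precisely the input data required by Proposition~\ref{prop:even1}. The conclusion of that proposition endows the kernel of $E^\vee \surj E_x^\vee/W$ with a natural orthogonal form taking values in $L^\vee \otimes {\mathcal O}_X(-x) = L^\vee(-x)$. But by \eqref{eq:even2} this kernel is exactly $F^\vee$, so the claim follows.

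The only point I would take care to verify is that the form produced by Proposition~\ref{prop:even1} genuinely coincides with the restriction of $\varphi^{-1}$ followed by the factorization through $L^\vee(-x)$, rather than being some a priori unrelated orthogonal structure. This is immediate from the construction in the proof of Proposition~\ref{prop:even1}: the form on the smaller bundle is obtained by exhibiting the composition $F^\vee \otimes F^\vee \inj E^\vee \otimes E^\vee \too L^\vee$ as a homomorphism that vanishes along the divisor $x$ (thanks to the isotropy of $W$), and hence factors uniquely through $L^\vee(-x)$. I do not foresee any substantive obstacle; the statement is a formal self-dual companion of Proposition~\ref{prop:even1}, and the whole proof is essentially the observation that the construction of Proposition~\ref{prop:even1} is compatible with the duality $(E,\varphi)\leftrightarrow(E^\vee,\varphi^{-1})$ on orthogonal bundles.
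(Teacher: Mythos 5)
Your proposal is correct and follows exactly the paper's own argument: the paper likewise proves this by applying Proposition~\ref{prop:even1} to the dual orthogonal bundle $(E^\vee,\varphi^{-1}:E^\vee\otimes E^\vee\too L^\vee)$ and the isotropic subspace $W\subset E_x^\vee$. Your additional care in checking that the resulting form is genuinely the factorization of the restricted $\varphi^{-1}$ through $L^\vee(-x)$ is a reasonable elaboration of the same one-line argument.
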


\begin{proof}
The proposition follows by applying Proposition
\ref{prop:even1} to the orthogonal bundle 
$(E^\vee,\varphi^* :E^\vee \otimes E^\vee\too L^\vee)$
and the subspace $W\, \subset\, E_x^\vee$.
\end{proof}

Using \eqref{oi}, the orthogonal structure on $F^\vee$
in Proposition \ref{prop:even2} produces an orthogonal
structure
$$
F\otimes F\,\too\, L(x)
$$
on $F$.

\begin{proposition}
\label{prop:odd1}
Let $F\too X$ be a vector bundle of rank $r=2n+1$
equipped with a symmetric bilinear form
$$
\psi:F\otimes F\too M
$$
which induces a short exact sequence
\begin{equation}\label{exs}
0 \too F \too F^\vee\otimes M \too \CC_{x}\too 0\, ,
\end{equation}
where $\CC_x$ is the skyscraper sheaf of length one
supported over the point $x$.
In other words, $\psi$ 
is non-degenerate everywhere except at $x$, and
in an \'etale neighborhood of $x$, it is of the
form
\begin{equation}
\label{eq:psi}
\left(
  \begin{matrix}
    0 & & \mathds{1}_n\\
      & t & \\
    \mathds{1}_n & & 0\\
  \end{matrix}
\right)
\end{equation}
(i.e., the $(n+1,n+1)$-th entry is $t$ and any other $(i,j)$-th
entry is $\delta_{|i-j|, n+1}$),
where $t$ is a local parameter at $x\in X$. Let
\begin{equation}
\label{rest}
0\too l_x \too  F_x \too F_x^\vee\otimes M_x \too \CC_{x}\too 0
\end{equation}
be the exact sequence obtained by restricting the above
short exact sequence to the point $x$.
Let $S_x$ be an isotropic subspace of dimension $n$
of $F_x/l_x$.
Define $Q_x:=(F_x/l_x)/S_x$, and consider the
short exact sequence
$$
0 \too E \too  F \too Q_x \too 0\, .
$$
Then $\psi$ induces an orthogonal structure on $E$
$$
\varphi:E\otimes E \too L:=M(-x)\, .
$$
\end{proposition}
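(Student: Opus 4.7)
\medskip

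\noindent\textbf{Proof plan.} The approach is to mirror the proof of Proposition \ref{prop:even1}, working in an explicit étale local model. By hypothesis, we may choose an étale neighborhood of $x$ in which $\psi$ takes the form \eqref{eq:psi}; in this trivialization the radical $l_x\subset F_x$ is spanned by the middle basis vector $e_{n+1}$ (since $\psi(e_{n+1},e_j) = \delta_{|n+1-j|,n+1}=0$ for $j\neq n+1$ and $\psi(e_{n+1},e_{n+1})=t$ vanishes at $x$), and the induced nondegenerate form on $F_x/l_x$ has the standard hyperbolic presentation in the basis $e_1,\dots,e_n,e_{n+2},\dots,e_{2n+1}$. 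By Witt's theorem, the orthogonal group of this nondegenerate form acts transitively on isotropic $n$-planes; thus, after modifying the étale trivialization by a constant element of the stabilizer of the form \eqref{eq:psi} (which preserves its shape), we may assume that $S_x$ is spanned by the images of $e_1,\dots,e_n$. Then the preimage $\widetilde S_x\subset F_x$ is spanned by $e_1,\dots,e_{n+1}$, and $Q_x=F_x/\widetilde S_x$ is represented by $e_{n+2},\dots,e_{2n+1}$.

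In this trivialization the inclusion $E\hookrightarrow F$ is represented by the diagonal matrix whose first $n+1$ entries equal $1$ and whose last $n$ entries equal the local parameter $t$. The plan is now to compute the composition
$$
E\otimes E\,\longrightarrow\, F\otimes F\,\stackrel{\psi}{\longrightarrow}\, M
$$
entry by entry. The $(n+1,n+1)$-entry is $1\cdot 1\cdot t=t$. The off-diagonal entries of the form \eqref{eq:psi} pair an index $i\leq n+1$ with an index $j>n+1$ (or vice versa), and therefore acquire a factor $1\cdot t = t$ from the inclusion $E\hookrightarrow F$. All other entries vanish. Consequently every entry of the pulled-back matrix is divisible by $t$, so the map $E\otimes E\to M$ factors through $L=M(-x)$.

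Finally, dividing out by $t$ yields a matrix that is again symmetric and, evaluated at $x$, has $1$'s on the anti-diagonal and on the central $(n+1,n+1)$ entry, all other entries zero. This is visibly nondegenerate, so $\varphi:E\otimes E\to L$ is a genuine orthogonal structure. The only non-mechanical step is the opening observation that the étale trivialization of $\psi$ can be chosen compatibly with $S_x$; this is the place where Witt extension is invoked, but it is essentially immediate once one restricts the form to the nondegenerate quotient $F_x/l_x$.
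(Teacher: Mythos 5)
Your proof is correct and follows essentially the same route as the paper: normalize the \'etale trivialization so that $\psi$ has the form \eqref{eq:psi} and $S_x$ is the standard isotropic subspace, represent $E\hookrightarrow F$ by $\operatorname{diag}(\mathds{1}_{n+1}, t\mathds{1}_n)$, and observe that the pulled-back form is $t$ times a matrix that is nondegenerate even at $t=0$. The only difference is that you make explicit (via Witt's theorem) the compatibility of the trivialization with $S_x$, which the paper simply asserts.
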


\begin{proof}
  Choose a local trivialization such that $\psi$ is of the
form in (\ref{eq:psi}), and furthermore,
the homomorphism $\beta:E\too F$ is of the form
$$
\left(
  \begin{matrix}
    \mathds{1}_n & & 0\\
      & 1 & \\
    0 & & t\mathds{1}_n\\
  \end{matrix}
\right)\, .
$$
Then the composition $E\otimes E\too F\otimes F \too M$
is
$$
\left(
  \begin{matrix}
    0 & & t\mathds{1}_n\\
      & t & \\
    t\mathds{1}_n & & 0\\
  \end{matrix}
\right)\, .
$$
This homomorphism $E\otimes E \too M$
 vanishes at $x\in M$, hence it factors through
$L:=M(-x)$, inducing a homomorphism $\varphi:E\otimes E \too L$.
This $\varphi$ is symmetric and nondegenerate.
\end{proof}

\begin{proposition}
\label{prop:odd2}
  Let $(E,\varphi:E\otimes E\too L)$ be an orthogonal bundle
over $X$ with $\rk(E)=2n+1$, and let $W_x\subset E_x^\vee$ be
an isotropic subspace of dimension $n$. Define $F^\vee$
using the short exact sequence
\begin{equation}
  \label{eq:odd2}
0\,\too\, F^\vee\,\too\, E^\vee\,\too\, E^\vee_x/W_x
\,\too\, 0 \, .
\end{equation}
So $F\, \subset\, E(x)\, :=\, E\otimes_{{\mathcal O}_X}{\mathcal 
O}_X(x)$. Consider the composition
$$
F\otimes F\, \hookrightarrow\, E(x)\otimes E(x)
\, \stackrel{\varphi}{\longrightarrow}\, L(2x)\, .
$$
Its image lies in $M\, :=\, L(x)\, \subset\, L(2x)$, 
and the corresponding symmetric bilinear form
$$
\psi\,:\, F \otimes F \,\too\, M
$$ 
is everywhere non-degenerate except at the point $x$, where it
is locally of the form
$$
\left(
  \begin{matrix}
    0 & & \mathds{1}_n\\
      & t & \\
    \mathds{1}_n & & 0\\
  \end{matrix}
\right)\, .
$$
\end{proposition}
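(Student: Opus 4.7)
The strategy is a direct local computation in an \'etale neighborhood of $x$, in the spirit of the proof of Proposition \ref{prop:odd1}; everything in the statement is local around $x$ since away from $x$ one has $F=E$ and $\psi=\varphi$, and both containments $F\subset E(x)$ and $\psi(F\otimes F)\subset L(x)$ can be checked stalkwise at $x$.

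First I would use the isomorphism $\widehat{\varphi}\colon E\stackrel{\sim}{\longrightarrow}E^\vee\otimes L$ to transport $W_x\subset E_x^\vee$ to a subspace of $E_x$ that is isotropic for $\varphi$. By transitivity of the orthogonal group on isotropic subspaces of the given type, I can then choose a local \'etale trivialization of $E$ and a compatible trivialization of $L$ near $x$ in which $\varphi$ takes the standard non-degenerate form
$$
\left(\begin{matrix} 0 & & \mathds{1}_n \\ & 1 & \\ \mathds{1}_n & & 0 \end{matrix}\right),
$$
and in which $W_x$ is the span of a canonical block of dual basis vectors. From this local model one reads off an explicit basis of the subsheaf $F^\vee\subset E^\vee$ defined by \eqref{eq:odd2}: namely, those dual basis vectors already lying in $W_x$ at $x$, together with $t$-multiples of the remaining dual basis vectors, where $t$ is a local parameter at $x$. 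Dualizing, $F=(F^\vee)^\vee$ is locally generated by the corresponding basis vectors of $E$, with the ``extra'' ones divided by $t$; this realizes the inclusion $F\subset E(x)$ asserted in the statement.

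It then remains to evaluate $\varphi$ on pairs of these local generators of $F$, viewed inside $L(2x)$. Pairings with no $1/t$-factor give regular sections of $L$; pairings with exactly one $1/t$-factor give sections of $L$ multiplied by $1/t$, hence regular sections of $L(x)$. The potentially problematic pairings of two $1/t$-generators take the form $\varphi(e_i,e_j)/t^2$, but the isotropy condition on $W_x$ forces these entries to vanish except for the single non-isotropic diagonal direction intrinsic to the odd-rank case, where the pairing contributes a simple pole in the trivialization of $L$, i.e.\ the entry $t$ after passing to the trivialization of $M=L(x)$. Assembling the matrix of $\psi$ in this trivialization of $M$ produces exactly the standard degenerate form displayed in the statement, and the non-degeneracy of $\psi$ away from $x$ follows because $F=E$ and $\psi=\varphi$ there.

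The main obstacle is the careful pole bookkeeping: one has to verify that the isotropy hypothesis on $W_x$ cancels precisely the would-be off-central double poles produced by the two $1/t$-factors, while still leaving a single simple pole, which is what produces the $t$ entry in the $(n+1,n+1)$ position. Once this matching has been made explicit in the chosen local basis, the containment in $M$ and the asserted local form at $x$ are both immediate.
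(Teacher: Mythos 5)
Your overall strategy --- a direct local matrix computation on the $F$ side, rather than the paper's computation of $\psi'=f^t\varphi^{-1}f$ on the $F^\vee$ side followed by inversion of $\psi'$ --- is perfectly reasonable, but the pole bookkeeping that you yourself single out as the main obstacle is exactly where the argument breaks. Under the reading you adopt, $F^\vee$ is generated by the $n$ dual basis vectors spanning $W_x$ together with $t$ times the remaining $n+1$ dual basis vectors; dualizing, $F$ is generated by $e_1,\dots,e_n$ and $e_{n+1}/t,\dots,e_{2n+1}/t$, so the non-isotropic middle vector $e_{n+1}$ is among the generators divided by $t$. The central pairing is then $\varphi(e_{n+1}/t,e_{n+1}/t)=1/t^2$, a \emph{double} pole: the isotropy of $W_x$ kills the off-central double-pole entries but says nothing about this one, the image of $F\otimes F$ does not lie in $L(x)$, and in the trivialization of $M=L(x)$ the central entry would be $1/t$, not $t$. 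Your assertion that this direction ``contributes a simple pole'' is therefore not only unproved but false for the local generators you chose.

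The computation closes only if the middle generator of $F$ is $e_{n+1}$ itself, i.e.\ if $F^\vee$ consists of the sections of $E^\vee$ whose value at $x$ lies in the $(n+1)$-dimensional coisotropic subspace $W_x^\perp\supset W_x$, so that the inclusion $F^\vee\hookrightarrow E^\vee$ is $\operatorname{diag}(\mathds{1}_n,1,t\mathds{1}_n)$, with $n+1$ unit entries. This is precisely the matrix $f$ the paper's own proof writes down, and it is what consistency with Proposition \ref{prop:odd1} demands: there the torsion quotient $Q_x$ has length $n$, so the dual modification must also have a length-$n$ quotient, namely $E_x^\vee/W_x^\perp$ rather than $E_x^\vee/W_x$. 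In other words, the gap in your write-up sits exactly on top of an off-by-one in the displayed exact sequence \eqref{eq:odd2}; a correct proof must either modify along $W_x^\perp$ instead of $W_x$ or otherwise explain why the central direction is not twisted down, after which your local computation does go through and reproduces the stated matrix.
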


\begin{proof}
With respect to a local trivialization of $E$ compatible with the 
filtration $W_x\subset S_x\subset E_x^\vee$, we have
$$
\varphi^{-1}=
\left(
  \begin{matrix}
    0 & & \mathds{1}_n\\
      & 1 & \\
    \mathds{1}_n & & 0\\
  \end{matrix}
\right)
\quad \text{~and~}\,~\,
f=
\left(
  \begin{matrix}
    \mathds{1}_n & & 0\\
      & 1 & \\
    0 & & t\mathds{1}_n\\
  \end{matrix}
\right)
$$
where $f$ is the homomorphism $F^\vee\hookrightarrow E^\vee$.
Therefore,
$$
\psi'=
\left(
  \begin{matrix}
    0 & & t\mathds{1}_n\\
      & 1 & \\
    t\mathds{1}_n & & 0\\
  \end{matrix}
\right)
\quad \text{~and~}\,~
(\psi')^{-1}=
\frac{1}{t}
\left(
  \begin{matrix}
    0 & & \mathds{1}_n\\
      & t & \\
    \mathds{1}_n & & 0\\
  \end{matrix}
\right)
$$
Since $(\psi')^{-1}$ has a pole of order one at $x\in X$,
it induces a homomorphism with values on $M:=L(x)$; this
induced homomorphism has the required properties.
\end{proof}

We will also need to consider vector bundles $F$ with a
symmetric bilinear
tensor $\psi:F\otimes F\too M$ which can be degenerate at some point
(as in Proposition \ref{prop:odd1}). In this case we still
say that a subsheaf $F'\subset F$ is \textit{isotropic} if the 
restriction
of $\psi$ to $F'\otimes F'$ is identically zero.

Following \cite{NR2}, we define:

\begin{definition}
Let $k$, $l$ be integers. A symmetric bilinear tensor $(E,\varphi)$
is $(k,l)$--stable (respectively, $(k,l)$--semistable) if for all
isotropic subbundles $E'$ of it of positive rank, the following 
inequality holds:
$$
\frac{\deg(E') + k}{\rk(E')} < \frac{\deg(E)+k-l}{\rk(E)}
$$
(respectively, $\frac{\deg(E') + k}{\rk(E')} 
\leq \frac{\deg(E)+k-l}{\rk(E)}$).
\end{definition}

If $k=l=0$ and $(E,\varphi)$ is an orthogonal bundle
(meaning $\varphi$ is nondegenerate), then the above
definition coincides with the definition of (semi)stable 
orthogonal bundles.

For any $t\,\in\, \mathbb R$, let $[t]$ be the unique integer
such that $t\leq [t] <t+1$.

\begin{lemma}
  \label{stable}
Let $(E,\varphi)$ be a $(n,n)$--stable
orthogonal bundle of rank $r$, where $n=[r/2]$.
Let $(F,\psi)$ be obtained from $(E,\varphi)$ as in
Proposition \ref{prop:even2} or Proposition \ref{prop:odd2}
(depending on the parity of $r$),
and let $(E',\varphi')$ be obtained from 
$(F,\psi)$ as in Proposition \ref{prop:even1} or 
Proposition \ref{prop:odd1}.
Then $(E',\varphi')$ is a stable orthogonal bundle.
\end{lemma}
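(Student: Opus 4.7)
The plan is a proof by contradiction. Assume $E''\subset E'$ is a nonzero isotropic subbundle with $\frac{\deg E''}{\rk E''}\geq\frac{\deg E'}{\rk E'}$, and build from it an isotropic subbundle of $E$ violating the $(n,n)$-stability of $(E,\varphi)$.

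First I would record the common structure behind both the even and odd constructions: $E$ and $E'$ both sit as subsheaves of the intermediate $F$, with torsion quotients $F/E$ and $F/E'$ supported at $x$, and the local matrix descriptions in the proofs of Propositions \ref{prop:even1}--\ref{prop:odd2} (the block-diagonal forms $\mathrm{diag}(\mathds{1}_n,t\mathds{1}_n)$ and $\mathrm{diag}(\mathds{1}_n,1,t\mathds{1}_n)$) show that each quotient has length $n$. Moreover, the forms $\varphi$ and $\varphi'$ are just the restrictions to $E\otimes E$ and $E'\otimes E'$ of the symmetric form $\psi:F\otimes F\too M=L(x)$, with image landing in the subsheaf $L\subset M$. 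Hence isotropy transfers faithfully among $(E,\varphi)$, $(F,\psi)$ and $(E',\varphi')$, and since $E$ and $E'$ are both orthogonal of rank $r$ with values in $L$, their degrees coincide, so $\frac{\deg E}{r}=\frac{\deg E'}{r}$.

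The core construction is then to set $E_0\subset E$ equal to the saturation in $E$ of the sheaf-theoretic intersection $E''\cap E$ computed inside $F$. With $m:=\rk E''$, the torsion quotient $E''/(E''\cap E)$ embeds into the length-$n$ sheaf $F/E$, so $\rk E_0=m$ and
$$\deg E_0\;\geq\;\deg(E''\cap E)\;\geq\;\deg E''-n.$$
Isotropy descends: $E''$ is isotropic for $\psi$, hence so are $E''\cap E$ and (being a closed condition on torsion-free sheaves) its saturation $E_0$ for $\varphi$. Since $E_0$ is isotropic, $m\leq n<r$, so $E_0$ is a proper positive-rank isotropic subbundle. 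Applying $(n,n)$-stability of $(E,\varphi)$ with $k=l=n$ to $E_0$ gives
$$\frac{\deg E_0+n}{m}\;<\;\frac{\deg E+n-n}{r}\;=\;\frac{\deg E'}{r},$$
and combining with $\deg E''\leq \deg E_0+n$ yields $\frac{\deg E''}{m}<\frac{\deg E'}{r}$, contradicting the assumption.

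The main obstacle I expect is purely bookkeeping: to verify that the shift by $n$ appearing in the $(n,n)$-stability hypothesis exactly compensates the slope loss incurred when passing from $E''\subset E'$ to $E_0\subset E$; this relies on the coincidence that both torsion quotients $F/E$ and $F/E'$ have length precisely $n$. A small additional subtlety, present only in the odd case, is that $F$ itself is not an orthogonal bundle (its form degenerates at $x$), so one must directly confirm that isotropy with respect to the degenerate $\psi$ still yields isotropy for the nondegenerate $\varphi$ and $\varphi'$; this follows at once from the factorizations through $L\subset M$.
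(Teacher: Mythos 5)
Your proposal is correct and follows essentially the same route as the paper: both arguments take an isotropic subbundle of $E'$, intersect it with $E$ inside the common overbundle $F$, bound the degree drop by $n$ (the length of the torsion quotient $F/E$), and then invoke the $(n,n)$--stability of $(E,\varphi)$; you merely phrase it as a contradiction and spell out the saturation and isotropy-transfer details that the paper leaves implicit.
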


\begin{proof}
By construction, we have a diagram
$$
\xymatrix{
 & & {0}\\
 & & {Q_x'} \ar[u]\\
{0} \ar[r] & {E} \ar[r] & {F} \ar[u]\ar[r] & {Q_x}\ar[r]& {0} \\
& & {E'} \ar[u]\\
 & & {0}\ar[u]\\
}
$$
Let $H\subset E'$ be an isotropic subbundle.
Then $H\bigcap E$ is an isotropic subsheaf of $E$,
and $\deg H - n \leq \deg H\bigcap E$, because the length
of $Q_x$ is $n$. Therefore,
$$
\frac{\deg H}{\rk H} \leq
\frac{\deg H\cap E + n}{\rk H} \leq
\frac{\deg E}{\rk E} =
\frac{\deg E'}{\rk E'}\, ,
$$
where the second inequality follows from the
$(n,n)$--stability condition on $E$.
\end{proof}

\begin{proposition}
\label{nnstable}
Let $\SM_L$ be the moduli space of stable orthogonal bundles
of rank $r$ and degree $d$ (the
line bundle $L$ is fixed). Assume that $g(X)\,>\,n$ if
$r=2n+1$ is odd, and assume that $g(X)\,>\,n+1$ if
$r=2n$ is even. Then the subset of $\SM_L$ corresponding to
$(n,n)$--stable bundles is nonempty Zariski open.   
\end{proposition}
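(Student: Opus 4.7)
The plan is to split the claim into openness (a direct closedness argument) and nonemptiness (a dimension count for the locus of $(n,n)$-unstable bundles).

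For the openness part, I would first observe that if $(E,\varphi)\in\SM_L$ fails $(n,n)$-stability via an isotropic subbundle $E'$ of rank $k$ and degree $d_1$, then stability of $(E,\varphi)$ forces $d_1<kd_L/2$ while $(n,n)$-instability forces $d_1\geq kd_L/2-n$ (with $d_L=\deg L$). Hence only finitely many numerical types $(k,d_1)$ with $1\leq k\leq n$ can arise as witnesses. For each such pair, the locus $Z_{k,d_1}\subset\SM_L$ of bundles admitting an isotropic subbundle of that type is the image of a projective morphism from the relative isotropic Quot scheme over $\SM_L$ (a closed subscheme of the ordinary Quot scheme cut out by the vanishing $\varphi|_{E'\otimes E'}=0$), hence closed. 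The $(n,n)$-unstable locus is the finite union $\bigcup_{k,d_1}Z_{k,d_1}$, and openness follows.

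For nonemptiness I would show each $Z_{k,d_1}$ has strictly positive codimension in $\SM_L$. The natural parameter scheme of pairs $(E,E')$ fibers over the moduli of $E'$ (rank $k$, degree $d_1$) and of the orthogonal quotient $F=(E')^\perp/E'$ (rank $r-2k$, values in $L$) via the two extensions
\begin{gather*}
0 \too E' \too (E')^\perp \too F \too 0,\\
0 \too (E')^\perp \too E \too (E')^\vee\otimes L \too 0,
\end{gather*}
where the second extension is constrained to be compatible with the symmetric form on $E$. Adding up the dimension contributions ($k^2(g-1)+1$ for $E'$, at most $\binom{r-2k}{2}(g-1)$ for $F$, and the dimensions of the two controlling Ext groups with the second restricted to its symmetric subspace), and comparing with $\dim\SM_L=\binom{r}{2}(g-1)$, I expect the codimension to be bounded below by a positive multiple of $g-n$ in the odd rank case and of $g-n-1$ in the even rank case, which is exactly what the stated genus hypotheses deliver.

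The main obstacle is handling the symmetry constraint on the second extension cleanly, since one must work with the symmetric subspace of $\Ext^1((E')^\vee\otimes L,(E')^\perp)$ rather than the full Ext group. The most uniform way around this is to move to the principal bundle side: if $Q\subset\GO(r,\CC)$ denotes the parabolic stabilizing an isotropic $k$-plane, a pair $(E,E')$ corresponds to a $Q$-reduction $\sigma$ of the $\GO(r,\CC)$-bundle of frames associated to $E$, and the infinitesimal picture is governed by the standard exact sequence
$$
0\,\too\,\sigma^*\operatorname{ad}(Q)\,\too\,\operatorname{ad}(E)\,\too\,\sigma^*\big(\operatorname{ad}\GO(r,\CC)/\operatorname{ad}(Q)\big)\,\too\,0.
$$
The codimension of $Z_{k,d_1}$ in $\SM_L$ is then computed by $h^0$ of the normal bundle on the right, whose degree is explicit in $k$, $d_1$, $d_L$ and $r$; an application of Riemann--Roch combined with the bound $d_1<kd_L/2$ forces this $h^0$ to be strictly positive exactly in the asserted genus ranges. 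Taking the union over the finitely many admissible numerical types $(k,d_1)$ then gives nonemptiness, and combined with the openness above completes the proof.
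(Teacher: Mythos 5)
Your overall architecture is the same as the paper's: openness via the finitely many admissible numerical types together with properness of the Quot scheme, and density via a dimension count of the locus of bundles admitting an isotropic subbundle, carried out on the principal-bundle side through the exact sequence $0\to \operatorname{ad}(P^Q)\to\operatorname{ad}(P)\to N\to 0$ and Riemann--Roch. The explicit degree and rank of $\operatorname{ad}(P^Q)=P^Q(\fq)$ that you defer to as ``explicit in $k$, $d_1$, $d_L$ and $r$'' is precisely the content of the paper's Lemma \ref{degrk}, which is a nontrivial computation with the structure of parabolic subalgebras of $\fgo(r,\CC)$ and cannot simply be waved at.

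Two points in your density step need repair before the estimate closes. First, the codimension of the stratum is not $h^0(N)$: it is $\dim\SM_L-(h^1(P^Q(\fq))-g)$, which, \emph{after} one proves $h^0(P^Q(\fq))=h^0(P(\fgo(r,\CC)))=1$ (a consequence of stability of $P$ and of $\fz(\fq)$ being one-dimensional -- a step you omit, and without which the Riemann--Roch bookkeeping does not reduce to a statement about $N$ alone), equals $-\chi(N)=\deg P^Q(\fq)+\rk(N)(g-1)$; note $h^0(N)$ could well vanish while $-\chi(N)$ is large. Second, and more seriously, the numerical input must be the \emph{lower} bound $e/s-d/r\geq -n/s$ coming from failure of $(n,n)$-stability, not the upper bound $d_1<k\,d_L/2$ from stability that you invoke: since $\deg P^Q(\fq)=(\frac{e}{s}-\frac{d}{r})\,s(r-s-1)\leq 0$, the stability inequality only makes this term more negative and can never produce a positive lower bound for the codimension. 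It is the instability inequality that gives $\deg P^Q(\fq)\geq -n(r-s-1)$, after which the elementary optimization of $-n(r-s-1)-\frac{3s^2-2rs+s}{2}(g-1)$ over $1\leq s\leq n$ yields exactly the thresholds $g>n$ for odd $r$ and $g>n+1$ for even $r$. With these corrections your sketch becomes the paper's proof.
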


\begin{proof}
There is a finite set of pairs $(r',d')\in \NN\times \ZZ$
such that there is a stable orthogonal bundle $(E,\varphi)\in \SM_L$
which has a quotient of rank $r'$ and degree $d'$ contradicting
the $(n,n)$--stability condition. This and the properness of
the Quot scheme together imply that the condition of being $(n,n)$--stable 
is Zariski open.

The dimension of $\SM_L$ is calculated using 
\cite[Theorem 5.9]{Ra3} and subtracting
$g(X)$, because the line bundle $L$ where the orthogonal
form takes values is fixed. More precisely,
\begin{eqnarray*}
\dim \SM_{L}& =&
(g(X)-1)\dim \GO(r,\CC) + \dim Z(\GO(r,\CC)) - g(X)\\
&=&
(g(X)-1)\frac{r^2-r}{2}\, .
\end{eqnarray*}

We will now estimate the dimension of the subset of
the moduli space $\SM_L$ corresponding to orthogonal bundles
which are not $(n,n)$--stable.

Let $(E,\varphi)$ be such
an orthogonal bundle, and let $P$ be the corresponding 
principal $\GO(r,\CC)$--bundle. An isotropic subbundle $H\subset E$
gives a reduction of structure group $P^Q\subset P$ 
to a maximal parabolic subgroup $Q\subset \GO(r,\CC)$.

Given a principal $Q$--bundle $E_Q\,\longrightarrow\, X$,
we get a principal $\GO(r,\CC)$--bundle
$E_Q\times^Q\GO(r,\CC)$ by extending the structure group.
Since the stability condition is open, a deformation of $P^Q$
as a principal $Q$--bundle will give a deformation of $P$ which
is stable but not $(n,n)$--stable. Furthermore, any deformation
of $P$ which is not $(n,n)$--stable must be of this form for some 
parabolic subgroup $Q\subset \GO(r,\CC)$. 

The tangent space of these deformations has dimension
$h^1(P^Q(\fq))-g(X)$, where $\fq$ is the Lie algebra of $Q$,
and $P^Q(\fq)$ is the adjoint vector bundle of $P^Q$. We subtract $g(X)$
because the line bundle $L$, where the orthogonal form takes values,
is fixed.

We claim that $h^0(P^Q(\fq))=1$. Indeed, on one hand we have
\begin{equation}\label{eoh}
h^0(P^Q(\fq))\geq \dim \fz(\fq)=1\, ,
\end{equation}
where ${\mathfrak z}(\fq)$ is the center of the Lie algebra $\fq$. 
On the other hand, since $P$ is a stable principal
$\GO(r,\CC)$--bundle, we have
$$
H^0(X,P(\fgo(r,\CC)))=\fz(\fgo(r,\CC)) \; ,
$$
where $P(\fgo(r,\CC))$ is the adjoint bundle of $P$, and
$\fz(\fgo(r,\CC))\, \subset\, \fgo(r,\CC)$ is the center
\cite[page 136, Proposition 3.2]{Ra1}.
Since $\fq$ is a submodule of the $Q$--module $\fgo(r,\CC)$,
the vector bundle $P^Q(\fq)$ is a subbundle of the adjoint
vector bundle $P(\fgo(r,\CC))$. Therefore,
$$
h^0(P^Q(\fq))\leq h^0(P(\fgo(r,\CC)))
= \dim \fz(\fgo(r,\CC)) = 1 \; .
$$
Combining this with \eqref{eoh} it follows that
$h^0(P^Q(\fq))\,=\,1$.

Using Riemann--Roch,
$$
h^1(P^Q(\fq))-g(X)\,=\, -\deg (P^Q(\fq)) +
(\rk(P^Q(\fq)) -1)\cdot(g(X)-1)\, .
$$
Therefore, by Lemma \ref{degrk}, 
the codimension $\codim Z$ of the subscheme
$Z\, \subset\, \SM_{L}$ defined by all
orthogonal bundles which are not $(n,n)$--stable
satisfies the inequality
\begin{eqnarray*}
  \codim Z & \geq & \dim \SM_L -(h^1(P^Q(\fq))-g)\\
& = & \dim \textup{O}(r) (g-1) + \deg P^Q(\fq) - (\rk P^Q(\fq)-1)(g-1) \\
& = &  \deg P^Q(\fq) + (\dim \textup{O}(r) - \dim\fq +1)(g-1) \\
& = & (\frac{e}{s}-\frac{d}{r})s(r-s-1) 
- \frac{3s^2-2rs+s}{2}(g-1) \\
& \geq & 
-n(r-s-1) 
- \frac{3s^2-2rs+s}{2}(g-1)\, .
\end{eqnarray*}
In the last line we have used the fact that 
$H\subset E$ contradicts the $(n,n)$--stability condition,
which translates into the inequality $e/s-d/r\geq -n/s$.

We have to show that the expression in the last line is positive.
We first assume that $r$ is odd, so we
substitute $r=2n+1$. The first summand in the last line is
$$
f_1(s):= -n(2n+1-s-1) = -2n^2+ns \geq -2n^2+n
$$
since $s\geq 1$.
The second summand becomes
$$
f_2(s):= -\big( \frac{3s^2}{2}-2ns-\frac{s}{2}\big)(g-1) 
= -\frac{3}{2}\big( s-(\frac{4n+1}{3})\big) s (g-1)\, .
$$
The graph of the function $f_2(s)$ is a parabola,
which is zero for $s=0$ and $s=(4n+1)/3$, and has
a maximum for $s=(4n+1)/6$. Therefore, the minimum
value in the interval $1\leq s\leq n$ is attained
at $s=1$. Consequently,
$$
f_2(s) \,\geq\, f_2(1)\,
= \,-\frac{3}{2}\big( 1-(\frac{4n+1}{3})\big) (g-1)\, .
$$
Finally,
$$
\codim Z \geq f_1(s)+f_2(s) \geq -2n^2+n -(1-2n)(g-1)\, ,
$$
and this is positive when $g>n$.

We now assume that $r$ is even, so we set $r=2n$.
The first summand is then
$$
f_1(s):= -n(2n-s-1) \geq -n(2n-2)
$$
since $s\geq 1$, and the second summand becomes
$$
f_2(s):= -\big( \frac{3s^2}{2}-2ns+\frac{s}{2}\big)(g-1) 
= -\frac{3}{2}\big( s-(\frac{4n-1}{3})\big) s (g-1)\, .
$$
The graph of the function $f_2(s)$ is a parabola,
which is zero for $s=0$ and $s=(4n-1)/3$, and it
is positive in between these values. 
Note that $(4n-1)/3>n$, because $n>1$.
(Recall that
we are assuming $r> 2$. Since we are in the even
case $r=2n$, this means that $n>1$.)
Hence the minimum
value in the interval $1\leq s\leq n$ is attained
at $s=1$. Therefore,
$$
f_2(s) \geq f_2(1)= -\frac{3}{2}\big( 1-(\frac{4n-1}{3})\big) (g-1)=
(2n-2)(g-1)\, .
$$
Finally,
$$
\codim Z \geq f_1(s)+f_2(s) \geq (2n-2)(g-1-n)\, ,
$$
and it is positive when $g>n+1$.
\end{proof}

\begin{lemma}
  \label{degrk}
Let $(E,\varphi:E\otimes E\too L)$ be an orthogonal bundle
with $\rk E=r$ and $\deg E=d$. Let $P$ be the corresponding
principal $\GO(r,\CC)$--bundle. Let $H\subset E$ be an isotropic
subbundle with $\deg H=e$ and $\rk H= s$.
Let $P^Q\subset P$ be the corresponding reduction of structure
group to a maximal parabolic subgroup $Q\subset \GO(r,\CC)$.
Then
\begin{eqnarray*}
  \deg P^Q(\fq)  & = & (\frac{e}{s}-\frac{d}{r})s(r-s-1)\, , \\
  \dim \fq & = &  \frac{r^2-r}{2} - \frac{2rs-3s^2-s}{2} + 1\, .  \\
\end{eqnarray*}
\end{lemma}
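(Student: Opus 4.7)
The plan is to work with an explicit block description of the maximal parabolic $Q\,\subset\, \GO(r,\CC)$, namely the stabilizer of the isotropic $s$-plane $V\,\subset\, \CC^r$ corresponding to the subbundle $H\,\subset\, E$. Fix a Witt-type basis of $\CC^r$ adapted to the flag $0\,\subset\, V\,\subset\, V^\perp\,\subset\, \CC^r$, so that the form $B$ is antidiagonal, and write $V_1=V$, $V_0=V^\perp/V$, $V_{-1}=\CC^r/V^\perp$. The reduction $P^Q\,\subset\, P$ coming from $H\,\subset\, E$ satisfies $H=P^Q(V_1)$ and $H^\perp/H=P^Q(V_0)$.

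For $\dim \fq$ I would identify $\GO(r,\CC)/Q$ with the orthogonal Grassmannian $\op{OG}(s,r)$ of isotropic $s$-planes in $(\CC^r,B)$, which is smooth of codimension $\binom{s+1}{2}$ in $\Gr(s,r)$ (the isotropy of $V$ imposes $s(s+1)/2$ independent quadratic conditions). Hence
$$
\dim \op{OG}(s,r)\,=\,s(r-s)-\frac{s(s+1)}{2}\,=\,\frac{2rs-3s^2-s}{2}\,,
$$
and subtracting from $\dim \fgo(r,\CC)=r(r-1)/2+1$ yields the stated formula for $\dim \fq$.

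The degree formula is the real content. I would use the Levi decomposition $\fq\,=\,\fl\oplus \fu$ together with the two-step grading $\fu\,\supset\,\fu_2$ on the unipotent radical, producing as associated graded of $\fq$ (as $Q$-modules) the four pieces
$$
\fgl(V_1),\qquad \fgo(V_0),\qquad \Hom(V_0,V_1)\otimes c^{-1},\qquad \wedge^2 V_1\otimes c^{-1},
$$
where $c\colon \GO(r,\CC)\too \CC^*$ is the similitude character. The twists by $c^{-1}$ on the unipotent pieces come from the $\GO$-equivariant identifications $V_{-1}\,\cong\, V_1^\vee\otimes c$ and $V_0^\vee\,\cong\, V_0\otimes c^{-1}$ that are induced by the form $\CC^r\otimes \CC^r\too \CC_c$. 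The corresponding associated vector bundles on $X$ are $\End(H)$, the adjoint bundle of the orthogonal bundle $H^\perp/H$, $\Hom(H^\perp/H,H)$, and $\wedge^2 H\otimes L^{-1}$. The first two have degree $0$, being adjoint bundles of reductive principal bundles. For the remaining two, using $\deg(H^\perp/H)=l(r-2s)/2$ from Lemma \ref{short}(\ref{short1}) and the relation $d=rl/2$ from \eqref{e1}, I find
$$
\deg \Hom(H^\perp/H,H)=(r-2s)\bigl(e-\tfrac{sl}{2}\bigr)=s(r-2s)\bigl(\tfrac{e}{s}-\tfrac{d}{r}\bigr),
$$
$$
\deg(\wedge^2 H\otimes L^{-1})=(s-1)\bigl(e-\tfrac{sl}{2}\bigr)=s(s-1)\bigl(\tfrac{e}{s}-\tfrac{d}{r}\bigr),
$$
which sum to the claimed $s(r-s-1)(e/s-d/r)$.

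The one place that requires care is the presence of the $c^{-1}$ twists on the two pieces of the unipotent radical: they are exactly what produce the $L^{-1}$ factors in $\Hom(H^\perp/H,H)\,\cong\, H\otimes (H^\perp/H)\otimes L^{-1}$ and in $\wedge^2 H\otimes L^{-1}$, and forgetting them would falsify the degree count. Once the $Q$-module structure of $\fu$ is pinned down correctly, the rest of the argument is the routine tensor-bundle computation above.
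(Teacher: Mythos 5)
Your proposal is correct and follows essentially the same route as the paper: both arguments come down to decomposing $P^Q(\fq)$ (up to the degree-preserving passage to the associated graded) into $\End(H)$, $\Hom(H^\perp/H,\,H)$, $\wedge^2 H\otimes L^\vee$ and the adjoint bundle of the orthogonal bundle $H^\perp/H$, the paper obtaining this by an explicit local matrix computation with the skew-symmetry condition on $\varphi\circ A$ and you by the standard two-step grading of the parabolic, while you get $\dim\fq$ from the dimension of the isotropic Grassmannian rather than as $\rk P^Q(\fq)$. The only wobble is notational: the piece $\fu/\fu_2$ is $V_1\otimes V_0\otimes c^{-1}\cong \Hom(V_0,V_1)$ (the similitude twist is absorbed by the identification $V_0^\vee\cong V_0\otimes c^{-1}$, not added on top of the literal $\Hom$), which is in fact what your degree computation uses, so the final numbers are right.
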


\begin{proof}
Let $P^Q(\fgl(r,\CC))$ be the Lie algebra bundle associated to $P^Q$ 
and the adjoint action of $Q$ on $\fgl(r,\CC)$; so,
$P^Q(\fgl(r,\CC))\cong E^\vee\otimes E$. Since $\fq$ is a
$\GO(r,\CC)$--submodule
of $\fgl(r,\CC)$, the vector bundle $P^Q(\fq)$ is a subbundle
of $E^\vee\otimes E$. The subbundle $P^Q(\fq)$ preserves the
filtration
\begin{equation}
  \label{eq:filt}
  H\subset H^\perp \subset E \; ,
\end{equation}
where $H^\perp$ is the orthogonal bundle to $H$ with respect to
the orthogonal structure $\varphi$. Therefore, we have $\rk H + \rk 
H^\perp\,=\, r$.

Let $L(Q)$ be the Levi quotient of the parabolic subgroup $Q$.
Fixing $T\, \subset\, B\,\subset\, Q$, where $T$ is a maximal torus
of $\GO(r,\CC)$ and $B$ a Borel subgroup of $\GO(r,\CC)$, 
the quotient $L(Q)$
of $Q$ can be realized as a subgroup of $Q$. In fact, the maximal
connected $T$--invariant reductive subgroup of $Q$ is identified
with $L(Q)$. Fix such a subgroup of $Q$. This subgroup of $Q$ will
also be denoted by $L(Q)$; it will be called the \textit{Levi
subgroup}.

Let $P^{L(Q)}$ denote the principal $L(Q)$--bundle
obtained by extending the structure group of $P^Q$ using the projection
of $Q$ to its Levi quotient $L(Q)$. Let $P^{L(Q)}(Q)$ be the principal
$Q$--bundle obtained by extending the structure group of $P^{L(Q)}$
using the inclusion of the Levi
subgroup $L(Q)\, \subset\, Q$ that has been fixed. The 
principal $Q$--bundle $P^{L(Q)}(Q)$ is topologically
isomorphic to the principal $Q$--bundle $P^Q$.
Hence the two adjoint bundles $P^Q(\fq)$ and $P^{L(Q)}(Q)(\fq)$ are
topologically isomorphic. Therefore, to calculate the degree of
$P^Q(\fq)$, we can replace $P^Q$ by $P^{L(Q)}(Q)$. In other words, we
can assume that $P^Q$ admits a reduction of structure group
$P^{L(Q)}\, \subset\, P^Q$ to the Levi subgroup $L(Q)\, \subset\, Q$.
Fix a reduction of structure group
$P^{L(Q)}\, \subset\, P^Q$ to $L(Q)$.

The filtration \eqref{eq:filt} splits using the reduction of structure
group $P^{L(Q)}\, \subset\, P^Q$. In other words, we have an
isomorphism,
\begin{equation}\label{isom.-2}
E\,\cong\, H \oplus (H^\perp/H) \oplus
(E/H^\perp)\, .
\end{equation}
Using \eqref{isom.-2}, a locally defined section of the adjoint bundle
$P^Q(\fq)$ has the form
\begin{equation}\label{eq.-A}
A=\left(
\begin{array}{ccc}
\alpha & \beta& \gamma\\
0 & \delta& \epsilon\\
0 & 0 &\eta
\end{array}
\right)
\end{equation}

The isomorphism of vector bundles
\begin{equation}\label{iso.-pr.}
 E\,\too\, E^\vee\otimes L
\end{equation}
induced by the orthogonal structure
$\varphi$ has the property that the composition
$$
H^\perp\, \hookrightarrow\, F\, {\too}\,
F^\vee\otimes M\, \too\, H^\vee\otimes M
$$
vanishes. Consequently, we have an induced isomorphism
$$
F/H^\perp\, \cong\, H^\vee\otimes M\, ,
$$
which we will denote by $\mathds{1}$. Also note that $\varphi$ induces
an orthogonal structure on the vector bundle $H^\perp/H$. Let
$$
\varphi'\,:\, H^\perp/H\,\too\, (H^\perp/H)^\vee\otimes M
$$
be the isomorphism induced by this
orthogonal structure.

Now, using \eqref{isom.-2}, the isomorphism
in \eqref{iso.-pr.} has the form
\begin{equation}\label{eq.-va.-ph}
\left(
\begin{array}{ccc}
0&0&\mathds{1}\\
0& \varphi' & 0\\
\mathds{1}&0&0
\end{array}
\right)
\end{equation}
where $\varphi'$ is defined above.

A parabolic subalgebra $\fq$ of $\fgo(r,\CC)$ is of the form
$\fq=\fq'\oplus \CC$, where $\fq'$ is a
parabolic subalgebra of $\fso(r,\CC)\,=\, {\rm Lie}(\text{SO}(r,\CC))$,
and
the summand $\CC$ is the center of $\fgo(r,\CC)$. This decomposition
is preserved by the adjoint action of $\GO(r,\CC)$.
Therefore,
$$
P^Q(\fq)\,=\, P^Q(\fq')\oplus \SO_X\, .
$$
The condition that the local section $A$,
defined in \eqref{eq.-A}, of $P(\fgl(2n,\CC))$ lies in
$P^Q(\fq')$ is equivalent to the condition that
$$
\varphi\circ A\,=\,
\left(
\begin{array}{ccc}
0& 0& \eta\\
0& \varphi\circ \delta& \varphi\circ \epsilon\\
\alpha& \beta& \gamma
\end{array} \right) \,:\,
E \,\too\, E^\vee\otimes L
$$
is skew-symmetric, where $\varphi$ is defined in \eqref{eq.-va.-ph}.
Clearly, $\varphi\circ A$ is skew-symmetric if and only if the 
following three conditions hold:
\begin{enumerate}
\item $\eta\,=\,-\alpha^t$,

\item $\epsilon\,=-\,\varphi'{}^{-1}\circ \beta^t$, and

\item the homomorphisms $\gamma$ and $\varphi'\circ \delta$ are
skew-symmetric.
\end{enumerate}
Therefore, there is an isomorphism
\begin{equation}\label{cal.-deg.}
P^Q(\fq') \,\cong\, \text{End}(H)
\oplus ((\frac{H^\perp}{H})^\vee\otimes H) \oplus
((\bigwedge^2 H)\otimes M^\vee) \oplus (\bigwedge^2
(\frac{H^\perp}{H})^\vee \otimes M)
\end{equation}
defined by
$$
A \, \longmapsto\,
(\alpha\, , \beta\, ,\gamma\, ,\varphi'\circ\delta)\, .
$$
{}From this isomorphism it follows immediately that
$\rk (P^Q(\fq))\,=\,\dim \fq$.

Using \eqref{cal.-deg.} we further have
$$
\deg (P^Q(\fq))\,=\, \deg(P^Q(\fq'))\,=\,
(\frac{e}{s}-\frac{d}{r})s(r-s-1)\, .
$$
This completes the proof.
\end{proof}

\section{Hecke cycles}

In this section, we will construct a family
of orthogonal bundles using the constructions introduced
in Proposition \ref{prop:even1} and Proposition \ref{prop:odd1}.
If the rank is even, then the starting point will be
an orthogonal bundle. If the rank is odd, then
the starting point will be a vector bundle with
a symmetric bilinear form singular over a fixed point $x$
(as in Proposition \ref{prop:odd1}).

We will first describe the even case $r=2n$.

Let $(F,\psi:F\otimes F\too L(x))$ be a $(0,n)$--stable
orthogonal bundle over $X$ of rank $r=2n$
(as in Proposition \ref{prop:even1}). So
$\psi$ takes values in
the line bundle $L(x)$, where $x\in X$ is the fixed point.
We assume that $L$ is such that
\begin{equation}\label{dF}
\deg F>(2g-2)r+r+n\, .
\end{equation}

Take a quotient $F_x\,\too\, Q_x$ over the given point $x\in X$
with $\dim Q_x=n$. We obtain the
following commutative diagram of coherent sheaves on $X$:
\begin{equation}
\label{com}
\xymatrix{
& {0} & {0} \\
{0} \ar[r] & {S_x} \ar[r]\ar[u] & {F_x}\ar[r]\ar[u] & {Q_x}\ar[r]& {0}\\
{0} \ar[r] & {E} \ar[r]\ar[u]  & {F}\ar[r]\ar[u] & {Q_x}\ar[r]\ar@{=}[u]& {0}\\
& {F(-x)} \ar[u] \ar@{=}[r]& {F(-x)}\ar[u] \\
& {0}\ar[u] & {0}\ar[u] \\
}\end{equation}
All the quotients of $F_x$ with the property that the
corresponding kernel $S_x$ is an isotropic
subspace of $F_x$ are parameterized by
$$
Y\,=\,\text{Gr}_{\text{iso},n}(F_x)\,\cong\,\text{GO}(2n,\CC)/Q\, ,
$$
where $Q\, \subset\, \text{Gp}(2n,\CC)$ 
is the parabolic subgroup preserving a fixed
isotropic subspace of dimension $n$ of $\CC^{2n}$. 

All these Hecke
transformations parameterized by $Y$ combine to form
the following commutative diagram of sheaves on $X\times Y$
\begin{equation}
\label{family}
\xymatrix{
& {0} & {0} \\
{0} \ar[r] & {i_*j^*\cS} \ar[r]\ar[u] & 
{F_x\otimes \SO_{\{x\}\times Y}}\ar[r]\ar[u] & 
{i_*j^*\SQ}\ar[r]& {0}\\
{0} \ar[r] & {\SE} \ar[r]\ar[u]  & {p_1^*F}\ar[r]\ar[u] & 
{i_*j^*\SQ}\ar[r]\ar@{=}[u]& {0}\\
& {p^*_1 F(-x)} \ar[u] \ar@{=}[r]& {p^*_1 F(-x)}\ar[u] \\
& {0}\ar[u] & {0}\ar[u] \\
}
\end{equation}
where $i\,:\,\{x\}\times Y\,\too\
X\times Y$ and $j\,:\,\{x\}\times Y \,\too\, Y$
are the natural inclusion and isomorphism respectively;
here $p_1$ is the natural projection of $X\times Y$ to $X$.
The vector bundles $\cS$ and $\SQ$ are respectively the 
universal subbundle
and quotient bundle of $F_x$ over $Y\cong \GO(2n,\CC)/Q$.

If the given orthogonal bundle
$(F,\psi:F\otimes F\too L(x))$ is $(0,n)$--stable,
then all the orthogonal bundles
constructed by Hecke transformations from
$(F,\psi)$ are stable (see Lemma \ref{stable}).
The resulting classifying morphism
\begin{equation}
\label{heckemorphism}
\Psi\,:\,Y \,\too\, \SM_L
\end{equation}
will be called the \textit{Hecke morphism}, where
$\SM_L$ as before is the moduli space of orthogonal bundles
of rank $2n$ with values in $L$.

We claim that $H^1(X, F(-x))\,=\,0$.
Indeed, after twisting the middle row of (\ref{com})
with $\SO_X(-x)$, the associated long exact sequence
gives a surjection $$H^1(E(-x))\too H^1(F(-x))\, .$$
Since $(F,\psi:F\otimes F\too L(x))$ is $(0,n)$--stable,
the orthogonal bundle $(E,\varphi:E\otimes E\too L)$ is stable,
therefore the underlying vector bundle 
$E$ is semistable by Proposition \ref{Esemistable}.
We have $$h^1(E(-x))\,=\, h^0(K_X(x)\otimes E^\vee)$$
(Serre duality). On the other hand, 
$\deg E=\deg F-n > (2g-2)r+r$ (see \eqref{dF}).
Therefore, $K_X(x)\otimes E^\vee$ is
a semistable vector bundle of negative degree,
so it cannot have nonzero sections. This
proves the claim that $H^1(X, F(-x))\,=\,0$.

Applying $p_{2*}$ to \eqref{family}, where
$p_2$ is the natural projection of $X\times Y$ to $Y$,
we obtain the following
commutative diagram of sheaves on $Y$:
\begin{equation}
\label{globalcom}
\xymatrix{
& {0} & {0} \\
{0} \ar[r] & {\cS} \ar[r]\ar[u] & 
{F_x\otimes \SO_{Y}}\ar[r]\ar[u] & 
{\SQ}\ar[r]& {0}\\
{0} \ar[r] & {\SW=p_{2*}\SE} 
\ar[r]\ar[u]  & {H^0(F)\otimes \SO_Y}\ar[r]\ar[u] & 
{\SQ}\ar[r]\ar@{=}[u]& {0}\\
& {H^0(F(-x))\otimes \SO_Y} \ar[u] \ar@{=}[r]& 
{H^0(F(-x))\otimes \SO_Y}\ar[u] \\
& {0}\ar[u] & {0}\ar[u] \\
}
\end{equation}

Now assume that $r=2n+1$.

Let $F\,\longrightarrow X$ be a vector bundle of
rank $r=2n+1$ with a symmetric
bilinear form
$$
\psi:F\otimes F\too L(x)
$$
which induces a short exact sequence as in \eqref{exs}
(so $(F\, ,\psi)$ is as in Proposition \ref{prop:odd1}).
We assume that $(F\, ,\psi)$ is $(0,n)$--stable, and also
assume that \eqref{dF} holds.

Let $S_x\subset F_x/l_x$
be an isotropic subspace of dimension $n$.
The isotropic subspace of dimension $n$ are parameterized by
$$
Y   \,=\, \text{Gr}_{\text{iso},n}(F_x/l_x)
   \,\cong\, \text{GO}(2n,\CC)/Q .
$$
This subspace $S_x$ induces a commutative diagram
\begin{equation}
\label{com'}
\xymatrix{
& {0} & {0} \\
{0} \ar[r] & {S_x} \ar[r]\ar[u] & {F_x/l_x}\ar[r]\ar[u] & {Q_x}\ar[r]& {0}\\
{0} \ar[r] & {E} \ar[r]\ar[u]  & {F}\ar[r]\ar[u] & {Q_x}\ar[r]\ar@{=}[u]& {0}\\
& {F'} \ar[u] \ar@{=}[r]& {F'}\ar[u] \\
& {0}\ar[u] & {0}\ar[u] \\
}\end{equation}
Note that this diagram is different from \eqref{com}, because
in the top row we have $F_x/l_x$ instead of $F_x$;
in the bottom row, instead of $F(-x)$, we have a new vector
bundle $F'$.
Arguing as in the even case, we finally obtain a commutative
diagram on $Y$, with $\cS$ and $\SQ$ respectively being the
universal isotropic subbundle and quotient of $F_x/l_x$
\begin{equation}
\label{globalcom'}
\xymatrix{
& {0} & {0} \\
{0} \ar[r] & {\cS} \ar[r]\ar[u] & 
{F_x/l_x\otimes \SO_{Y}}\ar[r]\ar[u] & 
{\SQ}\ar[r]& {0}\\
{0} \ar[r] & {\SW=p_{2*}\SE} 
\ar[r]\ar[u]  & {H^0(F)\otimes \SO_Y}\ar[r]\ar[u] & 
{\SQ}\ar[r]\ar@{=}[u]& {0}\\
& {H^0(F')\otimes \SO_Y} \ar[u] \ar@{=}[r]& 
{H^0(F')\otimes \SO_Y}\ar[u] \\
& {0}\ar[u] & {0}\ar[u] \\
}
\end{equation}

\begin{proposition}
\label{stables}
Let $Q\subset \GO(2n,\CC)$ be the maximal parabolic subgroup
preserving a fixed isotropic subbundle $V\subset \CC^{2n}$
of dimension $n$.
Then the Picard group of $Y\,=\,\GO(2n,\CC)/Q$ is $\ZZ$,
the universal vector subbundle $\cS\, \longrightarrow\,
\GO(2n,\CC)/Q$ is stable,
and the determinant of $\cS$ is $-2 A$, where $A$ is the ample
generator of $\Pic(Y)$.
\end{proposition}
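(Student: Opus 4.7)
The plan is to identify $Y$ with a rational homogeneous variety of Picard rank one, compute $\det \cS$ via the Plücker embedding, and deduce stability of $\cS$ from Ramanan's theorem on irreducible homogeneous bundles.

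First I would observe that the centre $\CC^*\,\subset\, \GO(2n,\CC)$ of scalar matrices acts trivially on every Grassmannian and is therefore contained in $Q$. After passing to the relevant connected component, $Y$ is identified with $\text{SO}(2n,\CC)/P$, where $P$ is the stabilizer of $V$ in $\text{SO}(2n,\CC)$; this $P$ is a maximal parabolic of the simple (for $n\geq 3$) group $\text{SO}(2n,\CC)$. Since $P$ is maximal, the lattice of characters of $P$ contributing to $\Pic(Y)$ is of rank one, giving $\Pic(Y)\cong \ZZ$ with ample generator $A$.

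Next, for the determinant: $\cS$ is the restriction to $Y$ of the tautological subbundle of $\Gr(n,2n)$, so $\det \cS^*$ is the restriction of the Plücker line bundle. As a $\text{SO}(2n,\CC)$-equivariant line bundle, $\det \cS^*$ corresponds to the character of $P$ given by the determinant of the Levi action on the $n$-dimensional space $V$. In standard type $D_n$ coordinates this character equals $\epsilon_1+\cdots+\epsilon_n\,=\,2\omega$, where $\omega$ is the spinor fundamental weight generating $\Pic(Y)$. Hence $\det \cS^*=2A$, and therefore $\det \cS=-2A$.

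For stability, I would invoke the classical theorem of Ramanan that an irreducible homogeneous vector bundle on a rational homogeneous space is stable with respect to any ample polarization. The bundle $\cS$ is $\GO(2n,\CC)$-equivariant, and its fibre over the base point $[V]$ is the standard representation of the Levi $\GL(n,\CC)\subset P$ on $V$, which is irreducible; hence $\cS$ is stable. Alternatively, one may argue directly: if $\cS$ were not stable, then the first step of its Harder--Narasimhan filtration (or the socle, if $\cS$ is merely semistable) is unique, hence $\GO(2n,\CC)$-invariant, and would therefore arise from a proper non-zero $P$-stable subspace of $V$, contradicting irreducibility.

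The main obstacle is tracking the factor of two in $\det \cS=-2A$: it stems from the half-integer structure of the spinor fundamental weight $\omega$ of $D_n$, and requires a careful comparison of the character $\det V$ of the Levi action with the generator of $\Pic(\text{SO}(2n,\CC)/P)$.
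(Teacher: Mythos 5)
Your proposal is correct and follows essentially the same route as the paper: identify $Y$ with a homogeneous space of $\text{SO}(2n,\CC)$ (resp.\ its simply connected cover $\text{Spin}(2n,\CC)$) modulo a maximal parabolic to get $\Pic(Y)=\ZZ$, obtain the factor of $2$ in $\det\cS$ by comparing the character $\det$ of the Levi $\GL(V)$ with the spinor generator of the character lattice (the paper phrases this via the $2$-to-$1$ covering $\wt{Q}'\to Q'$ squaring the centre, you via $\epsilon_1+\cdots+\epsilon_n=2\omega$ --- the same computation), and deduce stability from the Ramanan--Umemura theorem on bundles associated to irreducible representations of the Levi. No essential difference.
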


\begin{proof}
Note that
$$
\GO(2n,\CC)/Q=\text{SO}(2n,\CC)/Q'=\text{Spin}(2n,\CC)/\wt{Q}'\, ,
$$
where $Q'=Q\bigcap \text{SO}(2n,\CC)$, and $\wt{Q}'$ is
the corresponding maximal parabolic subgroup in 
$\text{Spin}(2n,\CC)$. Therefore, $\wt{Q}'\too Q'$
is a 2-to-1 covering which restricts to $t\longmapsto t^2$
on the center.

The spin group is semisimple and simply connected, hence
the Picard group of $\textup{Spin}(2n,\CC)/\wt{Q}'$ is identified
with the character group of $\wt{Q}'$, which is equal to the
character group of the Levi quotient $L(\wt{Q}')$; since
$\wt{Q}'$ is maximal, this character group is equal to $\ZZ$.
Therefore, $\Pic Y=\ZZ$ (\cite{BH1, BH2}).

Take another isotropic subspace 
$W \subset \CC^{2n}$
such that $V+ W = \CC^{2n}$. The orthogonal form on $\CC^{2n}$
identifies $W$ with $V^\vee$. A Levi subgroup $L(Q')$ of $Q'$
is defined
by all orthogonal automorphisms of $\CC^{2n}$ that preserve
the direct sum decomposition $V\oplus W$, i.e.,
automorphisms taking $V$ to $V$ and $W$ to $W$.

Since $W \,=\, V^\vee$, the Levi subgroup
$L(Q')\, \subset\, Q'$ is identified with $\text{GL}(V)$,
sitting inside $\text{SO}(2n,\CC)$ as
\begin{equation}
\label{inj}
A \,\longmapsto\, 
\left(
\begin{array}{cc}
A& 0\\
0& (A^t)^{-1}
\end{array}
\right)
\end{equation}
using the above decomposition $\CC^{2n} = V\oplus V^\vee$.
Consequently, the character $f$ of $L(Q')\, =\,
\text{GL}(V)$ defined by
$$
f(A)\,\longmapsto\, \det A\, ,
$$
where $A$ is as in \eqref{inj}, generates the character group
of $L(Q')$. 

Since $\wt{Q}'\too Q'$ is a 2-to-1
covering, restricting to $t\longmapsto t^2$ on the center,
the character $f$ corresponds to twice the 
generator of the character group of $L(\wt{Q}')$.
The line bundle over $Y$ defined by the above character
$f$ of $L(Q')$
coincides with the top exterior power of the tautological
subbundle $\cS$.
Therefore, the first Chern class of the vector
bundle $\cS$ over the complete homogeneous
space $Y$ is equal to twice a generator 
of the Picard group of $Y$. It is easy to see that
$c_1(\cS)$ is non-positive.

Since $\cS$ is identified with the vector bundle associated 
to the principal $Q'$--bundle over $Y$ for
an irreducible representation of the Levi quotient of $Q'$,
a theorem due to Ramanan and Umemura says that the vector
bundle $\cS$ is stable (see \cite[page 136, Theorem 2.4]{U}).
\end{proof}

\begin{corollary}
\label{nonzero}
The Hecke morphism $\Psi:Y\too \SM_L$ in
\eqref{heckemorphism} induces a nonzero 
homomorphism $\Psi^*:\Pic \SM_L \too \Pic Y$.
\end{corollary}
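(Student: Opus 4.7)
The plan is to exhibit an explicit line bundle on $\SM_L$ whose $\Psi$-pullback is a nonzero element of $\Pic(Y)\cong\ZZ$. The natural candidate is (a suitable twist of) the determinant of the Picard bundle $W\too\SM_L$, and the computation is already essentially built into diagram \eqref{globalcom} (or \eqref{globalcom'} in the odd-rank case).

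First I would identify the pullback $\Psi^{*}W$ with the pushforward $\SW=p_{2*}\SE$ from \eqref{globalcom}. By construction of the Hecke morphism $\Psi$, the family $\SE$ on $X\times Y$ represents $y\mapsto\Psi(y)\in\SM_L$, so base change gives $\Psi^{*}W\cong\SW$, hence $\Psi^{*}(\det W)=\det\SW$ in $\Pic(Y)$.

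Next I would compute $\det\SW$ by a short diagram chase. Taking determinants of the middle row of \eqref{globalcom},
$$
0\too\SW\too H^{0}(F)\otimes\SO_{Y}\too\SQ\too 0,
$$
yields $\det\SW=(\det\SQ)^{-1}$. Taking determinants of the top row,
$$
0\too\cS\too F_{x}\otimes\SO_{Y}\too\SQ\too 0,
$$
yields $\det\SQ=(\det\cS)^{-1}$. Combining, $\det\SW=\det\cS$. By Proposition \ref{stables} this determinant is $-2A\neq 0$, where $A$ is the ample generator of $\Pic(Y)$, so $\Psi^{*}(\det W)=-2A$ is nonzero, proving that $\Psi^{*}$ is a nonzero homomorphism.

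The main subtlety is the descent step that makes rigorous sense of the Picard bundle (or at least its determinant) as a class on $\SM_L$ when a global universal bundle need not exist on $X\times\SM_L$. The standard resolution is to pass to an \'etale cover of $\SM_L$ on which a universal family exists and verify that the resulting line bundle $\det W$ descends intrinsically, or else to work with the projectivized Picard bundle, which is canonically defined and which is in any case the object of interest for Theorem \ref{thm.}. The odd-rank case is handled identically, applying the same argument to \eqref{globalcom'} with $F_{x}/l_{x}$ replacing $F_{x}$; Proposition \ref{stables} still applies verbatim because $Y$ and the universal isotropic subbundle $\cS$ are of the same type.
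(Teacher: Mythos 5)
Your proof is correct and follows essentially the same route as the paper: the key step in both is reading off $\det\SW=\det\cS=-2A\neq 0$ from the two short exact sequences in \eqref{globalcom} (resp.\ \eqref{globalcom'}) and invoking Proposition \ref{stables}. You merely make explicit the determinant chase and the base-change/descent point that the paper leaves implicit.
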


\begin{proof}
Since $(F,\psi)$ is $(0,n)$--stable, we obtain a family of 
stable orthogonal bundles $(E,\varphi)$, and the
morphism $\Psi$ is well defined.

In the even case (respectively, odd case),
it follows from \eqref{globalcom} (respectively, \eqref{globalcom'})
that $\deg \SW\,=\,\deg \cS$, which by Proposition
\ref{stables} is equal to $-2$. But if the induced homomorphism
$\Psi^*$ were zero, then we would have had $\deg \SW=0$. 
\end{proof}

\section{Stability of Picard bundle}
\label{picard}

Let
\begin{equation}\label{u-a}
\SM^0_L\, \subset\, \SM_L
\end{equation}
be the locus of stable principal bundles
for which the automorphism group coincides with the
center of $\GO(r,\CC)$.

\begin{lemma}\label{lem-aut}
The subset $\SM^0_L$ in \eqref{u-a} is Zariski open, and
its complement is of codimension at least two.
\end{lemma}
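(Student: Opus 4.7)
The plan is to prove openness and the codimension bound separately, using the structure of automorphism groups of stable principal bundles.

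Openness follows from the general fact that for a stable principal $G$-bundle the identity component of the automorphism group scheme equals the (image of the) center of $G$. Applied to our stable $(E,\varphi)$, this means $\mathrm{Aut}(E,\varphi)/Z(\GO(r,\CC))$ is a finite group whose order is upper semi-continuous on $\SM_L$ in families, so $\SM^0_L$, the locus where this order equals $1$, is Zariski open.

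For the codimension bound, I would stratify the complement $\SM_L\setminus\SM^0_L$ by the conjugacy class of the reductive centralizer $H\subsetneq\GO(r,\CC)$ of a non-central automorphism $\alpha\in\mathrm{Aut}(E,\varphi)$; by a standard argument each such $(E,\varphi)$ admits a reduction of structure group to such an $H$. Up to conjugacy the relevant $H$ are of block form $\CC^*\cdot\bigl(\prod_i\orth(r_i,\CC)\times\prod_j\GL(n_j,\CC)\bigr)$ with $\sum_i r_i+2\sum_j n_j=r$, corresponding to decompositions $E=\bigoplus_i E_i\oplus\bigoplus_j(V_j\oplus V_j^\vee\otimes L)$ into smaller orthogonal bundles and Lagrangian pairs. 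Each stratum has dimension at most $\sum_i\dim\SM_{L,r_i}+\sum_j(n_j^2(g-1)+1)$; the $r_i=1$ summands contribute zero, since such a line bundle is forced to be a square root of $L$, of which there are only finitely many.

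Subtracting from $\dim\SM_L=(g-1)r(r-1)/2$ (Proposition \ref{nnstable}) gives worst-case codimensions $(g-1)r_1r_2$ for pure orthogonal splittings and $n(n-1)(g-1)-1$ for the maximal Lagrangian reduction $\GL(n)\subset\GO(2n,\CC)$. Under the hypotheses $r\geq 3$ and the genus bounds of Proposition \ref{nnstable} ($g>n$ for odd rank, $g>n+1$ for even rank), both expressions comfortably exceed $2$, which is the required bound.

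The main obstacle will be the Lagrangian case: its codimension estimate is the tightest of all strata and genuinely requires the genus hypothesis, whereas the pure orthogonal splittings give generous bounds. A secondary hurdle is to confirm that the catalogue of centralizers $H$ above is complete, so that no non-central automorphism escapes the enumeration; this follows from the standard block form of reductive centralizers in $\GO(r,\CC)$.
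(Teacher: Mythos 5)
Your overall strategy is the same as the paper's: observe that a non-central automorphism of a stable $(E,\varphi)$ is semisimple, reduce the structure group to its centralizer, stratify the complement of $\SM^0_L$ by conjugacy classes of centralizers, and bound the dimension of each stratum by the dimension of a moduli space of $H$-bundles. The openness argument via semicontinuity of $|\mathrm{Aut}/Z|$ is a reasonable (if slightly underjustified) substitute for what the paper leaves implicit.

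There is, however, a genuine gap in the codimension count. You enumerate \emph{all} reductive centralizers, including those with $\GL(n_j)$ factors coming from Lagrangian pairs $V_j\oplus V_j^\vee\otimes L$, and you then need the genus hypotheses of Proposition \ref{nnstable} to control those strata. But Lemma \ref{lem-aut} carries no genus hypothesis beyond the standing $g\geq 2$, and your estimate for the maximal Lagrangian stratum, $n(n-1)(g-1)-1$, equals $1$ for $g=2$, $r=4$ ($n=2$), so your argument does not prove the lemma as stated. The missing idea is the one the paper uses: a stable principal bundle admits no reduction to a Levi subgroup of a proper parabolic, so the centralizer of a non-central automorphism cannot be such a Levi; by the finiteness result of Digne--Michel this leaves only finitely many (quasi-isolated) classes, all of the form $\CC^*\cdot(\orth(r_1)\times\orth(r_2))$. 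Concretely, your Lagrangian strata are empty inside the stable locus: if $V_j$ is a nonzero isotropic eigen-subbundle, its partner eigen-subbundle is $V_j^\vee\otimes L$, their slopes sum to $\deg L=2\mu(E)$, and hence one of the two isotropic subbundles violates orthogonal stability. Once those strata are discarded, your remaining count $(g-1)r_1r_2\geq 2$ (valid for all $g\geq 2$ since $r\geq 3$ forces $r_1r_2\geq 2$), together with the observation that rank-one factors contribute dimension zero, does give the lemma in full generality; so the gap is repairable, but the repair is precisely the step you skipped.
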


\begin{proof}
Let $Z(\GO(r,\CC))\, \subset\, \GO(r,\CC)$ be the center.
Let $E_{\GO(r,\CC)}\, \in\, \SM_L$ be a stable
principal $\GO(r,\CC)$--bundle. Assume that
the automorphism group $\text{Aut}(E_{\GO(r,\CC)})$
has an element $\tau$ which does not lie in
$Z(\GO(r,\CC))$. The Lie algebra of 
$\text{Aut}(E_{\GO(r,\CC)})$ coincides
with the global section
$H^0(X, E_{\GO(r,\CC)}(\fgo(r,\CC)))$ of the
adjoint vector bundle, and hence it coincides
with the center $\fz(\fgo(r,\CC))$ because
$E_{\GO(r,\CC)}$ is stable. Therefore,
the quotient $\text{Aut}(E_{\GO(r,\CC)})
/Z(\GO(r,\CC))$ is a finite group.
This implies that $\tau$ is a semisimple element.

Since $\tau$ is semisimple, it defines a conjugacy
class of elements of $\GO(r,\CC)$ (see the second
paragraph of \cite[Section~3]{BBN}). Fix an
element $\overline{\tau}\, \in\, \GO(r,\CC)$
in the conjugacy class given by $\tau$. Let
$$
{\mathcal C}_{\overline{\tau}}\, \subset\, \GO(r,\CC)
$$
be the centralizer of
$\overline{\tau}$. The principal $\GO(r,\CC)$--bundle 
$E_{\GO(r,\CC)}$ admits a holomorphic reduction of structure
group to ${\mathcal C}_{\overline{\tau}}$ \cite[p. 230, Theorem 
3.2]{BBN}. Since the principal $\GO(r,\CC)$--bundle
$E_{\GO(r,\CC)}$ is stable, it does not admit any reduction
of structure group to any Levi subgroup of some proper
parabolic subgroup of
$\GO(r,\CC)$. Hence ${\mathcal C}_{\overline{\tau}}$ is not
a Levi subgroup of some proper parabolic subgroup of $\GO(r,\CC)$.

Up to conjugacy, there are only finitely many semisimple
elements
$$
c'_1\, ,\cdots \, ,c'_m\, \in\,
\GO(r,\CC)/Z(\GO(r,\CC))
$$
whose centralizer is not a Levi subgroup of some parabolic
subgroup of the semisimple group $\GO(r,\CC)/Z(\GO(r,\CC))$ (see 
\cite[p.~113]{DM}).
Fix elements
$$
c_1\, ,\cdots \, ,c_m\, \in\, \GO(r,\CC)
$$
such that $c_i$ projects to $c'_i$.

Let
$$
{\mathcal C}_{c_i}\, \subset\, \GO(r,\CC)
$$
be the centralizer of $c_i$. Let
${\mathcal M}({\mathcal C}_{c_i})$ be the moduli space of
stable principal ${\mathcal C}_{c_i}$--bundles over $X$
that maps to $\SM_L$ by extension of structure group
of principal ${\mathcal C}_{c_i}$--bundles to
$\GO(r,\CC)$. Note that the complement 
$\SM_L\setminus \SM^0_L$ is the image of the union
$\bigcup_i {\mathcal M}({\mathcal C}_{c_i})$. From the
formula for dimension of a moduli
space of principal bundles it follows immediately that
$$
\dim {\mathcal M}({\mathcal C}_{c_i})\, \leq\,
\dim \SM_L -2\, .
$$
This completes the proof of the lemma.
\end{proof}

The projectivized Picard bundle is a
principal $\PGL(N,\CC)$--bundle $P^\PGL$ on $\SM^0_L$,
such that for any point $(E,\varphi)\, \in\, \SM^0_L$,
the fiber over $(E,\varphi)$ of the associated projective
bundle $P^\PGL(\PP^{N-1})$ is canonically
identified with $\PP(H^0(X, E))$. Note that $N\, =\,
\dim H^0(X, E)$. From the construction of the moduli space
it follows that the projectivized Picard bundle exists.

The moduli space $\SM_L$ has a natural compactification,
namely the coarse moduli space of semistable orthogonal
bundles, which known to be a normal projective variety. The
complement $\overline{\SM}_L\subset \SM_L$ has codimension
at least 2, and hence we can think of the projectivized
Picard bundle as a rational principal bundle (their definition
is recalled below) on the projective variety $\overline{\SM}_L$.

Recall from \cite{Ra2} 
that a \textit{rational principal bundle} on a normal projective
variety $M$ is a principal bundle $P$ on a big open set
$U\subset M$
(i.e., an open set whose complement has codimension at least two).
A rational principal bundle
is said to be \textit{stable} (respectively, \textit{semistable}) 
with respect to a polarization $\SO_M(1)$
if for every reduction $P^Q\subset P|_V$ to a maximal 
parabolic subgroup $Q$ defined on big open subset
$V\subset U$, and for all nontrivial dominant characters of $Q$
which are trivial on the center of $G$,
the inequality
$\deg P^Q(\chi)<0$ (respectively, $\deg P^Q(\chi)\leq 0$) holds, 
where $P^Q(\chi)=(P^Q\times \CC_\chi)/Q$
is the line bundle over $V$ associated to $P^Q$ for
the character $\chi$, and the degree is
calculated with respect to the polarization $\SO_M(1)$.

\begin{proposition}
\label{general}
Assume that
$g(X)\,>\,n$ if $r=2n+1$ is odd, and $g(X)\,>\,n+1$ if
$r=2n$ is even. 
Fix distinct points $x_1,\cdots,x_m\in X$. Let $Z$ be a subscheme of 
$\SM_L$ of codimension at least two. Then there is a nonempty Zariski 
open subset $V_0\,\subset\, \SM^0_L\,\subset \, \SM_L$ such that the 
following two hold:
\begin{enumerate}
\item If $(E,\varphi)\,\in\, V_0$, is $(n,n)$--stable.

\item Take any $(E,\varphi)\,\in\, V_0$ and choose
a point $x_i$. Let
$(F,\psi)$ be a general Hecke transformation of $(E,\varphi)$
with respect to $x_i$.
If $\Psi\,:\, Y \,\too\,\SM_L$ 
is the Hecke morphism, then $\Psi^{-1}(Z)$ is either
empty or its codimension in $Y$ is at least two.
\end{enumerate}
\end{proposition}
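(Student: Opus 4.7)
The plan is to build $V_0$ as an intersection of Zariski open subsets, one for each requirement. Property (1) together with the ambient condition $V_0\subset\SM^0_L$ is handled directly: by Proposition \ref{nnstable} the $(n,n)$-stable locus is a nonempty Zariski open subset of $\SM_L$ under the genus hypothesis, and by Lemma \ref{lem-aut} the locus $\SM^0_L$ is Zariski open with complement of codimension at least two. Their intersection $V_0'$ is thus a nonempty Zariski open subset of $\SM^0_L$. The substance of the proof is to further shrink $V_0'$ so that property (2) also holds at every one of the finitely many points $x_i$.

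Fix $x_i$ and assemble all Hecke transformations at $x_i$ into one family. Let $\mathcal{B}_i$ parametrize pairs $((E,\varphi),W)$, where $(E,\varphi)\in V_0'$ and $W\subset E^\vee_{x_i}$ is an isotropic subspace of dimension $n$, so that Proposition \ref{prop:even2} or Proposition \ref{prop:odd2} produces a Hecke lift $(F,\psi)$. Let $\mathcal{T}_i\to \mathcal{B}_i$ be the fiber bundle whose fiber over $((E,\varphi),W)$ is the parameter space $Y$ of isotropic subspaces $S$ of dimension $n$ in $F_{x_i}$ (respectively $F_{x_i}/l_{x_i}$ in the odd case). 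The Hecke descent of Proposition \ref{prop:even1} or Proposition \ref{prop:odd1}, combined with Lemma \ref{stable}, gives a well-defined morphism
\[
\Phi_i\,:\,\mathcal{T}_i\,\too\,\SM_L\, .
\]
Its restriction to a fiber of $\mathcal{T}_i\to \mathcal{B}_i$ is exactly the Hecke morphism $\Psi$ of \eqref{heckemorphism}. The total space $\mathcal{T}_i$ is smooth and irreducible, and the projection $\mathcal{T}_i\to \mathcal{B}_i$ is a smooth Grassmannian bundle.

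The core claim is that $\Phi_i$ is dominant and generically smooth, so that $\Phi_i^{-1}(Z)$ is a closed subset of codimension at least two in $\mathcal{T}_i$. Granted this, a standard generic fiber argument applied to the flat projection $\mathcal{T}_i\to \mathcal{B}_i$ produces a Zariski open subset $U_i\subset \mathcal{B}_i$ such that for every $((E,\varphi),W)\in U_i$ the intersection of the fiber over $((E,\varphi),W)$ with $\Phi_i^{-1}(Z)$, which is exactly $\Psi^{-1}(Z)\subset Y$, has codimension at least two in $Y$ (or is empty). Projecting $U_i$ to $V_0'$ yields a Zariski open subset $V_0^{(i)}\subset V_0'$ such that for every $(E,\varphi)\in V_0^{(i)}$ and a Zariski general $W$, the conclusion of (2) holds. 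Setting $V_0 := V_0^{(1)}\cap\cdots\cap V_0^{(m)}$ produces the required nonempty Zariski open subset of $\SM^0_L$. The main obstacle is the surjectivity of $d\Phi_i$ at a generic point: one must show that a general first-order deformation of $(E',\varphi')\in\SM_L$ can be realized by simultaneously perturbing $(E,\varphi)$, $W$, and $S$, which reduces to a cohomological calculation on $X$ localized near $x_i$ via the common middle bundle $(F,\psi)$.
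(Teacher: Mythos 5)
Your construction of the parameter spaces is exactly the paper's: your $\mathcal{B}_i$, $\mathcal{T}_i$ and $\Phi_i$ are the paper's $\SY$, $\ST$ and $p'$, and the reduction to $m=1$ and the final generic-fibre argument over the projection $\mathcal{T}_i\too\mathcal{B}_i$ also match. The gap is in your ``core claim.'' First, it is not proved: the surjectivity of $d\Phi_i$ at a generic point is deferred to ``a cohomological calculation'' that is never carried out, and that is where all the work would be. Second, and more seriously, the implication you want to draw from it is false as stated: a dominant, generically smooth morphism need not pull back a codimension-two subscheme to codimension two, because the fibre dimension of $\Phi_i$ can jump precisely over the special locus $Z$ (the blow-up of a point is dominant and generically smooth, yet the preimage of the centre is a divisor). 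What you actually need is a \emph{uniform} upper bound on $\dim\Phi_i^{-1}(z)$ for $z\in Z$, and generic information about $\Phi_i$ gives you no such bound.

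The paper obtains this bound for free and thereby avoids any dominance or smoothness statement. By the reversibility of the Hecke transformation (Propositions \ref{prop:even2} and \ref{prop:odd2}), a point of $\mathcal{T}_i$ lying over $(E',\varphi')$ is determined by $(E',\varphi')$ together with the two successive quotients $Q'_x$ and $Q_x$, each of which varies in a space of dimension at most $\dim Y$; hence every fibre of $\Phi_i=p'$ has dimension at most $2\dim Y$. Consequently $\dim\Phi_i^{-1}(Z)\le\dim Z+2\dim Y$, and since $\dim\mathcal{T}_i=\dim\SM_L+2\dim Y$, the generic fibre of $q$ restricted to $\Phi_i^{-1}(Z)$ has dimension at most $\dim Y+(\dim Z-\dim\SM_L)\le\dim Y-2$; the case in which $q(\Phi_i^{-1}(Z))$ is not dense is handled separately by passing to the complement of its closure. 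If you replace your core claim with this elementary fibre-dimension count, your argument closes and coincides with the paper's.
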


\begin{proof}
We will first assume that $m=1$, so there is only one point $x_1=x$.
If $m>1$, we take the intersection of the open subsets of
$\SM^0_L$ corresponding to each point $x_i$. 

Let $\SM_L^{(n,n)}\subset \SM^0_L$ be the open subset 
of $(n,n)$--stable orthogonal bundles. This is dense
because of Proposition \ref{nnstable}.

Let $p:\SY\too \SM^{(n,n)}_L$ be the fibration whose
fiber over $(E,\varphi)$ is canonically isomorphic
to $\text{Gr}_{\text{iso},n}(E_x^\vee)$, i.e., the set
of isotropic subspaces of $E_x^\vee$ of dimension $n$.
This fibration can easily be constructed because 
all the points in $\SM^{(n,n)}_L\subset \SM^0_L$, by definition, 
correspond to orthogonal bundles whose automorphisms are scalars.
Therefore, by Proposition \ref{prop:even2} or
Proposition \ref{prop:odd2}, each point of $\SY$ corresponds to a 
short exact sequence
\begin{equation}
\label{pointy}
0 \too E \too F \too Q_x \too 0\, ,
\end{equation}
where $F$ is the corresponding Hecke transform;
indeed, if we apply $Hom(\cdot\, ,\SO_X)$ to 
(\ref{eq:even2}) or (\ref{eq:odd2}) we obtain
this exact sequence).

Let $q:\ST\too \SY$ be the fibration whose fiber
over a point corresponding to a short exact sequence
as in \eqref{pointy} is canonically isomorphic
to $\text{Gr}_{\text{iso},n}(F_x/l_x)$ if $r$ is odd,
or canonically isomorphic to $\text{Gr}_{\text{iso},n}(F_x)$
if $r$ is even. Therefore, each point of $\ST$ corresponds to
a diagram
\begin{equation}
  \label{eq:pointt}
\xymatrix{
 & & {0}\\
 & & {Q_x'} \ar[u]\\
{0} \ar[r] & {E} \ar[r] & {F} \ar[u]\ar[r] & {Q_x}\ar[r]& {0} \\
& & {E'} \ar[u]\\
 & & {0}\ar[u]\\
}
\end{equation}
For all points in $\ST$, the orthogonal bundle $(E,\varphi)$ 
is $(n,n)$--stable, hence $(F,\psi)$ is $(0,n)$--stable,
and then $(E',\varphi')$ is stable as an orthogonal
bundle. Therefore, by sending 
a point corresponding to a diagram in \eqref{eq:pointt} to
$(E',\varphi')$, we obtain a morphism $p':\ST\too\SM_L$. So,
$$
\xymatrix{
 & {\ST} \ar[dr]^{q} \ar[ddl]_{p'}\\
& & {\SY}\ar[d]^{p}\\
{\SM_L}& & {\SM_L^{(n,n)}}
}
$$
Note that, for each point $y\in \SY$, the fiber $q^{-1}(y)$
is identified with a Hecke cycle $Y$ which is mapped by
$p'$ to $\SM_L$.

Let $Z'=p'{}^{-1}(Z)$. If $q(Z')$ is not dense in $\SY$, take
an open subset in the complement; the image of this open set
under $p$ satisfies the condition of the proposition.

It remains to analyze the case when $q(Z')$ is dense
in $\SY$. In this
case, the dimension of the generic fiber of $q|_{Z'}$ is
$$
\dim Z' - \dim \SY = \dim Z + \dim p' - \dim \SY = 
\dim Z + \dim p' - \dim \SM^{(n,n)}_L - \dim p = 
$$
$$
(\dim p - \dim p') +(\dim Z - \dim \SM_L)  \leq
\dim Y - 2\, .
$$
This completes the proof of the proposition.
\end{proof}

We have $\Pic(\SM^0_L)=\Pic(\SM^{}_L)=\ZZ$ (see \cite{BHo}, \cite{BLS}), 
so $\SM^0_L$ has a unique polarization. 

We can now state and prove the main theorem. Let $P^\PGL$ be the 
above defined projectivized Picard bundle on the moduli
space $\SM^0_L$ of stable orthogonal bundles
$$
(E,\varphi:E\otimes E\too L)
$$
on $X$. Denote $r=\rk E=r$, and $d=\deg E$.

\begin{theorem}\label{thm.}
Assume that
$g(X)\,>\,n$ if $r=2n+1$ is odd, and $g(X)\,>\,n+1$ if
$r=2n$ is even. Also assume that $d>(2g-2)r+r$.
Then, the projectivized Picard bundle $P^\PGL$
over the moduli space $\SM^0_L$
is stable (since the Picard group of $\SM^0_L$ is $\mathbb Z$,
stability is independent of choice of polarization).
\end{theorem}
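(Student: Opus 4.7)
The plan is to argue by contradiction, pulling back a hypothetical destabilising subsheaf of the Picard bundle $\SV$ along the Hecke morphism of Section~4 and exploiting the explicit structure of $\SW=\Psi^*\SV$ on the homogeneous space $Y\cong\GO(2n,\CC)/Q$. Suppose $P^\PGL$ is not stable; since $\Pic(\SM^0_L)=\ZZ$ the notion of stability is independent of polarisation, and using Lemma~\ref{lem-aut} we extract a proper saturated coherent subsheaf $\SV'\subset\SV$ of rank $k$ with $0<k<N:=h^0(E)$ and $\mu(\SV')\ge\mu(\SV)$, defined on a big open subset of $\SM^0_L$. Apply Proposition~\ref{general} with $Z$ the complement of this open locus to produce a generic $(E,\varphi)\in V_0$ and, for each chosen $x\in X$, a Hecke morphism $\Psi\colon Y\to\SM^0_L$ through $(E,\varphi)$ with $\Psi^{-1}(Z)$ of codimension at least two in $Y$. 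The reflexive hull of $\Psi^*\SV'$ extends to a saturated subsheaf $\SW'\subset\SW$ on all of $Y$ preserving its degree. Since $\Psi^*\colon\Pic(\SM^0_L)\to\Pic(Y)\cong\ZZ$ is multiplication by an integer $c$, and the pullback of an ample class is nef and nonzero by Corollary~\ref{nonzero}, we have $c>0$; hence the destabilising inequality transfers to $Y$, i.e.\ $\mu(\SW')\ge\mu(\SW)=-2/N$, where $\deg\SW=\deg\cS=-2$ from \eqref{globalcom} via Proposition~\ref{stables}.

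Next, I would analyse $\SW'$ using the short exact sequence
$$0\to\ST\to\SW\to\cS\to 0$$
extracted from \eqref{globalcom} (resp.\ \eqref{globalcom'}), in which $\ST=H^0(F(-x))\otimes\SO_Y$ (resp.\ $H^0(F')\otimes\SO_Y$) is trivial of rank $N-n$, while $\cS$ is the universal rank-$n$ isotropic subbundle, stable of slope $-2/n$ by Proposition~\ref{stables}. Setting $\SW'_T=\SW'\cap\ST$ of rank $k_1$ and $\SW'_S=\SW'/\SW'_T\hookrightarrow\cS$ of rank $k_2=k-k_1$, triviality of $\ST$ on the projective variety $Y$ yields $\deg\SW'_T\le 0$, and stability of $\cS$ forces $\SW'_S$ to be $0$, equal to $\cS$, or a proper subsheaf with strict $\mu(\SW'_S)<-2/n$. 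If $\SW'_S=\cS$, the destabilising bound $\mu(\SW')\ge-2/N$ combined with $\deg\SW'_T\le 0$ forces $k_1=N-n$ and $\deg\SW'_T=0$, whence $\SW'_T=\ST$ and the extension defining $\SW'$ matches that of $\SW$, giving $\SW'=\SW$ and contradicting properness.

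Only the cases $\SW'_S=0$ (so $\SW'\subset\ST$) and $0<k_2<n$ remain. In the first, the fibres of $\SW'\subset\ST$ are constant, so $\SV'_{E_y}\subset H^0(F(-x))$ uniformly in $y\in Y$; by Proposition~\ref{general} one may simultaneously vary the Hecke datum $W\subset E_x^\vee$ of Proposition~\ref{prop:even2} and the point $x\in X$, and intersecting the resulting subspace constraints reduces $\SV'_E$ to the sections of $E$ vanishing at every point of $X$, i.e.\ to zero, a contradiction. The main obstacle is the intermediate case $0<k_2<n$, where slope estimates alone only yield the one-sided condition $n k_1>(N-n)k_2$; to eliminate it one combines the strict stability of $\cS$, the triviality of $\ST$, and the freedom afforded by Proposition~\ref{general} to vary the Hecke cycle, forcing the induced map $\SW'\to\cS$ to vanish for generic $\Psi$ and so reducing to the previous case, closing the contradiction.
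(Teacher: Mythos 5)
Your global framework matches the paper's: pull the reduction back along a Hecke morphism chosen via Proposition~\ref{general} so that $\Psi^{-1}(Z)$ has codimension two, use $\deg\SW=\deg\cS=-2$ from \eqref{globalcom} and Proposition~\ref{stables}, and analyse the destabilising object against the extension $0\to H^0(F_0)\otimes\SO_Y\to\SW\to\cS\to 0$. (One technical caveat: a Picard \emph{vector} bundle $\SV$ need not exist on $\SM^0_L$, only the projective bundle $P^\PGL$ does; the paper therefore works with a parabolic reduction and only obtains an honest subsheaf $\SH\subset\SW$ after pulling back to $Y$, where $\PP(\SW)\cong\Psi^*P^\PGL(\PP^{N-1})$ lifts. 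Your extremal cases $\SW'_S=\cS$ and $\SW'_S=0$ are handled acceptably, though the paper avoids the second case altogether by choosing the Hecke data up front so that a fixed section $s\in V'$ has nonzero image in $\cS$.)

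The genuine gap is exactly the case you flag as ``the main obstacle,'' $0<k_2<n$: this is the heart of the theorem and your proposal does not close it. Moreover the strategy you suggest --- forcing the map $\SW'\to\cS$ to vanish for generic $\Psi$ --- goes in the \emph{opposite} direction from what works: the correct argument forces the image $\SH''\subset\cS$ to have \emph{full} rank $n$ for a general Hecke cycle, which then contradicts the bound $\rk\SH''<n/2$ coming from stability of $\cS$ in the residual case $\deg\SH''=-1$ (the case $\deg\SH''\le -2$ being immediate). To get the full-rank statement one needs three ingredients entirely absent from your sketch: (i) a choice of the point $x$ among many points $x_1,\dots,x_m$ at which a fixed $s\in V'$ does not vanish and at which the subsheaf $E^{V'}$ generated by $V'$ is a subbundle with $E^{V'}_x\hookrightarrow E_x$ (this uses semistability of $E$ and $m>\deg(E)(1+1/\rk E)$); (ii) the sectional-stability estimate (Lemma~\ref{sectionalstability}), which via Clifford's theorem and the hypothesis $d>(2g-2)r+r$ converts the destabilising assumption $\rk\SH\ge h^0(E)/2$ into $\rk E^{V'}\ge r/2=n$; and (iii) the observation that for a general codimension-$n$ subspace $N_x\subset E_x$ the image of $E^{V'}_x$ in $E_x/N_x\cong S_x$ has dimension $\min(\rk E^{V'},n)=n$. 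Note that your proposal never uses the hypothesis $d>(2g-2)r+r$, which is indispensable for (ii); without it there is no lower bound on $\rk E^{V'}$ and no way to rule out the intermediate case by slope or genericity considerations alone.
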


\begin{proof}
Let
\begin{equation}\label{de.-re.}
P^{\overline{Q}}\, \subset\,P^\PGL
\end{equation}
be a reduction of structure group of $P$, on a big Zariski
open set $U\,\subset\,\SM^0_L$, to a maximal parabolic subgroup
$\overline{Q}\, \subset\, \PGL(N,\CC)$
(recall that by a big Zariski open subset we mean one whose 
complement is of codimension
at least two). The parabolic subgroup $\overline{Q}$
is the image of a unique maximal parabolic subgroup $Q$ of
$\GL(N,\CC)$ by the
natural projection $\GL(N,\CC)\,\too\,\PGL(N,\CC)$.
We need to prove that for a nontrivial dominant character
$\chi$ of $\overline{Q}$, the inequality
\begin{equation}
\label{stability}
\deg(P^{\overline{Q}}(\chi))\,<\, 0
\end{equation}
holds.
If $A$ is the unique proper nonzero subspace of $\CC^N$
preserved by $Q$, then any nontrivial dominant character
of $\overline{Q}$ is a positive multiple of the character
defined by the natural action of $\overline{Q}$ on the
line $\bigwedge^{\text{top}} \text{Hom}(\CC^N/A\, ,A)$.

The strategy of the proof is to use a Hecke morphism $\Psi\,:
\, Y\,\too\, \SM_L$ 
(defined in \eqref{heckemorphism}). We denote the restriction to
$Y^0:=\Psi^{-1}(\SM^0_L)$ as $\Psi^0:Y^0\too \SM^0_L$. Then we
calculate the degree using the pullback of $P^{\overline{Q}}$
to an open subset of $Y^0$. For this to work, we need that the
Hecke cycle be ``general enough'' in the sense that
$\Psi^{-1}(\SM_L\setminus U)$ has codimension
at least two. This is to ensure that the inclusion map
$Y^0\hookrightarrow Y$ induces an isomorphism of Picard groups.

Fix $m$ distinct points
$x_1,\cdots,x_m\in X$ with $m\,>\,\deg(E)(1+ 1/\rk(E))$. Set
$$
Z\, =\, U^c\, =\, \SM_L\setminus U \; .
$$
(Recall that the Picard bundle is defined on $\SM^0_L \subset\SM_L$,
and the reduction $P^{\overline{Q}}$ is defined on $U\subset \SM^0_L$.)
Fix an orthogonal bundle $(E,\varphi)$ over $X$
corresponding to a point in the intersection
of $U$ with the 
open subset of $\SM^0_L$ given by Proposition \ref{general}.

The reduction of structure group $P^{\overline{Q}}$ gives a
projective subbundle 
$$
P^{\overline{Q}}(\PP^{N'-1})\,\subset\, P^\PGL(\PP^{N-1})
$$
over $U$. 
The fiber of $P^\PGL(\PP^{N-1})$ over $(E,\varphi)$ is 
canonically isomorphic to $\PP(H^0(E))$, and the fiber
of $P^{\overline{Q}}(\PP^{N'-1})$ defines a subspace 
$V'\subset H^0(E)$.

Fix a nonzero element
$$
s\,\in\, V'\subset H^0(X,E)
$$
which we are going to consider as a section of $E$.

We claim that the section $s$ of $E$ 
cannot vanish in more than $\deg(E)/\rk(E)$
points. Indeed, if $D$ is a subset
of $\{x_1,\cdots,x_m\}$, and $s$ vanishes
in all points of $D$, then the section
$s: {\mathcal O}_X\longrightarrow E$ factors through $\SO_X(D)$, 
and the semistability of $E$ (Proposition \ref{Esemistable}) 
implies that $\deg(D)\leq \deg(E)/\rk(E)$.

Analogously, if $E^{V'}$ is the subsheaf of $E$ generated
by the sections $V'\subset H^0(X,E)$, then $E^{V'}$ cannot
fail to be a subbundle in more than $\deg(E)$ points.
Indeed, if $D$ is a subset of $\{x_1,\cdots,x_m\}$, and
$E^{V'}$ is not a subbundle on all points of $D$,
then the inclusion $E^{V'}\subset E$ factors through
a subsheaf $\wt{E}\subset E$ with $$\deg(\wt{E})\geq \deg(E^{V'})
+\deg D\, .$$
Since $E^{V'}$ is generated by global sections, it follows that
$\deg(E^{V'})\geq 0$. On the other hand, the stability condition
of $E$ implies that $$\deg \wt{E}\leq \deg(E)\rk(\wt{E})/\rk(E)\leq 
\deg(E)\, ,$$
therefore $\deg(D)\leq \deg(E)$.

Consequently, we can choose a point $x\in \{x_1,\cdots,x_m\}$
such that $s(x)\neq 0$, the sheaf $E^{V'}$ generated by $V'$ 
is locally free at $x$,
and the induced homomorphism 
\begin{equation}
  \label{eq:inj}
E^{V'}_x\too E_x^{}
\end{equation}
is injective.

If $r=2n$, 
using Proposition \ref{general}, we can choose an isotropic
subspace
$S_x\subset E_x$ of dimension $n$ such that
\begin{equation}\label{de.-ell}
s(x)\,\notin\, S_x\, ,
\end{equation}
and the subscheme $\Psi^{-1}(Z)\,\subset\, Y$ is either
empty or it is of
codimension at least two 
in $Y=\text{Gr}_{\text{iso},n}(F_{x})$, where $\Psi:Y\too
\SM_L$ is the 
Hecke morphism associated to $(F,\psi)$ and the
point $x\in X$ (see \eqref{heckemorphism}).

If $r=2n+1$, using again Proposition \ref{general}, we
choose an isotropic subspace $W_x \subset E_x$ of
dimension $n$ such that $s(x)\notin W_x^\perp$.

This bound on the codimension of $\Psi^{-1}(Z)$
implies that $\Pic(Y)=\Pic(Y_U)$, where $Y_U\, :=\,
Y\setminus \Psi^{-1}(Z)$ is the complement.
Now using Corollary \ref{nonzero} 
it follows that the homomorphism
$$
\Psi^*_U\,:\, \Pic(\SM_L)\,\too\, \Pic (Y_U)\,=\,\Pic(Y)
$$
is nonzero. Therefore,
there is a positive rational number $k$ such that 
\begin{equation}
  \label{eq:k}
\deg(P^{\overline{Q}}(\chi))
\,=\, k\cdot
\deg(\Psi^*_U P^{\overline{Q}}(\chi))\, .
\end{equation}
Let $\SE$ be the family
of Hecke transformations parameterized by $Y$ (see \eqref{family}). 
The pulled back projective bundle $\Psi^*P^\PGL$
on $Y$ lifts to the principal $\GL(N,\CC)$--bundle
$P^\GL$ over $Y$ associated to
$$
\SW\,=\, p_{2*}\SE\, ,
$$
where $p_{2}:X\times Y\too Y$ is the
natural projection. This means that there is an isomorphism
\begin{equation}\label{d.-e.}
\PP(\SW)\,\cong\, P^\PGL(\PP^{N-1})
\, .
\end{equation}
 
Let $\Psi_U\,:\,Y_U\,\too\, U$ be the restriction
of $\Psi$ to $Y_U\,=\,\Psi^{-1}(U)$. We claim that the pullback
$\Psi_U^* P^{\overline{Q}}$ of the reduction
in \eqref{de.-re.} is given by a subbundle of $\SW$. In other
words, there is a subbundle
\begin{equation}
  \label{eq:f}
\SH\,\inj\,\SW 
\end{equation}
on $Y_U$
such that the subbundle $\PP(\SH)\,\subset\,
\PP(\SW)$ is identified with the subbundle
$P^{\overline{Q}}(\PP^{N'-1})
\, \subset\, P^{\overline{Q}}(\PP^{N-1})$ by the
isomorphism in \eqref{d.-e.}.

To prove the above claim, note that, since
$\Psi_U^* P^{\overline{Q}}$ is a reduction of
structure group of $\Psi^*_U
P^\PGL$, it is given by 
a section of $\Psi^*_U
P^\PGL(\PGL(N,\CC)/{\overline{Q}})$. But
$$
\Psi^*_U P^\PGL(\PGL(N,\CC)/{\overline{Q}})\, =\,
\Psi^*_U P^\GL(\GL(N,\CC)/Q)\, .
$$
Hence such a section
gives a reduction of structure group to
$Q\, \subset\, \GL(N,\CC)$ of the principal
$\GL(N,\CC)$--bundle associated to $\SW$. Since
this is equivalent to giving a subbundle
$\SH\,\inj \,\SW$, the above claim, that
the pullback
$\Psi_U^* P^{\overline{Q}}$ of the reduction
in \eqref{de.-re.} is given by a subbundle of $\SW$,
is proved.

If $\chi$ is a dominant character of $\overline{Q}$, then it is easy
to check that 
\begin{equation}
\label{eq:kp}
\deg(\Psi^*_U P^{\overline{Q}}(\chi))= k' (\rk(\SW)\deg (\SH)-
\deg(\SW)\rk(\SH))
\end{equation}
for some positive number $k'$. Indeed, this follows from
the earlier remark that any nontrivial dominant character
of $\overline{Q}$ is a positive multiple of the character
defined by the natural action of $\overline{Q}$ on the
line $\bigwedge^{\text{top}} \text{Hom}(\CC^N/A\, ,A)$.

In view of (\ref{eq:kp}), (\ref{eq:k}) and (\ref{stability}),
to prove the theorem it is enough to check that $\SH$ does not
contradict the stability of $\SW$, meaning
\begin{equation}
  \label{eq:positive}
 \rk(\SW)\deg (\SH)-\deg(\SW)\rk(\SH)<0 \; .
\end{equation}

If $r$ is even, note that
since $S_x$ is the image of $F(-x)_x$ in $E_x$ in diagram
\eqref{com}, it follows that $s\notin H^0(F(-x))$
(see \eqref{de.-ell}).

If $r$ is odd, then the vector space $S_x$ is the image
of $F'_x$ in $E_x$ in diagram \eqref{com'}, and it
follows that $s\notin H^0(F')$.

In both cases,
using this and \eqref{globalcom} (respectively, \eqref{globalcom'})
for the even (respectively, odd) case we conclude that the composition
\begin{equation}
\label{composition}
\SH\,\inj\, \SW\,\too\, \cS
\end{equation}
has nonzero image.

To unify the notation for the even and odd cases, denote
$$F_0\, :=\, F(-x)$$
if $r=2n$, and
$$
F_0\,:=\,F'
$$
if $r=2n+1$. Consider the commutative 
diagram
\begin{equation}
  \label{eq:com1}
\xymatrix{
{0} \ar[r] & {H^0(F_0)\otimes \SO_{Y_U}} \ar[r] & {\SW} \ar[r] & {\cS}\ar[r]
& 0\\
{0} \ar[r] & {\SH'} \ar[r] \ar@{^{(}->}[u] & 
{\SH} \ar[r] \ar@{^{(}->}[u] & {\SH''}\ar[r] \ar@{^{(}->}[u]
& 0\\
}
\end{equation}
We have seen that $\SH''\neq 0$. 
The vector bundle $\cS$ is stable of degree $-2$
(Proposition \ref{stables}). We have $\deg \SH''\leq -1$ 
because $\SH''$ is
a subsheaf of a stable vector bundle of negative degree,
and also $\deg \SH'\leq 0$ as it is a subsheaf of a semistable
vector bundle of degree zero; see \eqref{eq:com1}.
If $\deg \SH''\leq -2$, then
$\deg \SH\leq -2$, and hence (\ref{eq:positive}) holds.

Therefore, for the rest of the proof we consider 
the case $\deg \SH''=-1$.

Assume that (\ref{eq:positive}) does not hold, in other words,
assume that
\begin{equation}
  \label{eq:assume}
  \rk{\SH}\geq \frac{h^0(E)}{2}\, .
\end{equation}
We have $\rk \SH''<\rk \cS$, because if we had equality
we should have $$\deg \SH''\leq \deg \cS=-2\, ,$$ but we
are now in the case $\deg \SH''=-1$. Hence the stability
condition of $\cS$ implies that
\begin{equation}
  \label{eq:f''}
  \rk \SH'' < \frac{n}{2}\, .
\end{equation}

By our choice of  $(E,\varphi)\in \SM^0_L$, the
coherent sheaf $\SH$ is locally free, and
the induced homomorphism $\SH_{(E,\varphi)}\too \SW_{(E,\varphi)}$ is 
injective. Let 
$V'=\SH_{(E,\varphi)}\subset \SW_{(E,\varphi)}=H^0(E)$.
By Lemma \ref{sectionalstability}, Lemma \ref{Esemistable} and 
(\ref{eq:assume}),
\begin{equation}
  \label{eq:2}
\rk E^{V'} \geq \rk E \,\frac{\dim V'}{h^0(E)} =
r \,\frac{\rk \SH}{h^0(E)} \geq \frac{r}{2}\, .
\end{equation}
By our choice of $x\in X$,  the induced homomorphism
$E^{V'}_x\too E^{}_x$ is injective (see (\ref{eq:inj})). 
Restricting the commutative
diagram (\ref{eq:com1}) to the fiber over $(E,\varphi)$,
we obtain the following commutative diagram 
$$
\xymatrix{
 & & {E_x}\ar@{->>}^{p}[rd] & \\
{H^0(F_0)}\ar[r] & {H^0(E)}\ar[rr]\ar[ru]^{e} & & {S_x}  \ar[r] & {0}\\ 
 & {V'}\ar[rr]\ar[u] & & {\SH''_{(E,\varphi)}}  \ar[u]\ar[r] & {0}\\ 
}
$$
where $e$ is the evaluation morphism and 
the projection $p$ fits in an exact sequence
$$
0 \too N_x \too E_x \stackrel{p}\too S_x \too 0\, .
$$
It follows that
$$
\SH''_{(E,\varphi)} = \textup{Im} (V'\to S_x) = 
\textup{Im}(V'\inj H^0(E) \to E_x \to S_x)
$$
$$
= \textup{Im}(E^{V'}_x\inj E_x \to S_x) \cong
\frac{E^{V'}_x}{E^{V'}_x\cap N_x}\, .
$$

It is easy to check that
$$
\dim \frac{E^{V'}_x}{E^{V'}_x\cap N_x} \leq
\dim S_x \; ,
$$
with equality holding if $N_x$ is a general subspace of $E_x$.

On the other hand, $\dim S_x=n$, and therefore,
for a general $N_x\subset E_x$, we have $\rk \SH''=n$,
which contradicts (\ref{eq:f''}). This implies that
the assumption in (\ref{eq:assume}) is false, meaning
(\ref{eq:positive}) holds, and the theorem if proved.
\end{proof}

\begin{lemma}
\label{sectionalstability}
Let $E$ be a semistable vector bundle on $X$ of degree $d$ 
and rank $r$. 
If $d > (2g-2)r +r$, then for all 
subspaces $V'\subset H^0(E)$,
$$
r \dim V' - \rk E^{V'} h^0(E) \leq 0\, ,
$$
where $E^{V'}\subset E$ is the subsheaf generated
by $V'\subset H^0(E)$.
\end{lemma}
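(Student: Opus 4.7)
My plan is to reduce the desired inequality to a bound on $h^0$ of the saturation of $E^{V'}$ in $E$, and then analyze each Harder--Narasimhan factor via Clifford's inequality for semistable vector bundles.

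First, the hypothesis $d > (2g-2)r + r$ gives $\mu(E) > 2g-1$, which combined with semistability and Serre duality yields $H^1(E) = 0$ and $h^0(E) = \chi(E) = d + r(1-g)$. Set $r' = \rk E^{V'}$, and let $\tilde E\subset E$ be the saturation of $E^{V'}$, a rank-$r'$ subbundle of $E$. Since $V'\subset H^0(E^{V'})\subset H^0(\tilde E)$, it suffices to prove $h^0(\tilde E) \leq r' h^0(E)/r$.

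Next, I would take the Harder--Narasimhan filtration $0 = F_0 \subset \cdots \subset F_k = \tilde E$ with semistable graded pieces $\mathrm{gr}_i = F_i/F_{i-1}$ of rank $r_i$, degree $e_i$, and slope $\mu_i$. Since $F_1\subset E$ and $E$ is semistable, $\mu_1 \leq d/r$; as HN slopes strictly decrease, $\mu_i \leq d/r$ for all $i$. From the short exact sequences $0 \to F_{i-1} \to F_i \to \mathrm{gr}_i \to 0$ and left exactness of $H^0$, one has $h^0(\tilde E) \leq \sum_i h^0(\mathrm{gr}_i)$, so it is enough to establish $h^0(\mathrm{gr}_i) \leq r_i h^0(E)/r$ for every $i$. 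Three cases arise: if $\mu_i < 0$, semistability forces $h^0(\mathrm{gr}_i)=0$; if $\mu_i > 2g-2$ then $h^1(\mathrm{gr}_i) = 0$ and $h^0(\mathrm{gr}_i) = \chi(\mathrm{gr}_i) = r_i(\mu_i + 1 - g) \leq r_i(d/r + 1 - g) = r_i h^0(E)/r$; if $0 \leq \mu_i \leq 2g-2$, I would invoke Clifford's inequality for semistable bundles to get $h^0(\mathrm{gr}_i) \leq e_i/2 + r_i$, and check that the resulting inequality $\mu_i/2 + 1 \leq d/r + 1 - g$ holds because $d/r > 2g-1$ forces $2d/r - 2g > 2g - 2 \geq \mu_i$.

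Summing these bounds gives $h^0(\tilde E) \leq r' h^0(E)/r$ and hence $r\dim V' \leq \rk E^{V'} \cdot h^0(E)$. The only nontrivial input is Clifford's inequality for semistable vector bundles, which I would quote from the literature rather than reprove. The main step requiring care is the middle case above, where one must verify that the Clifford bound is sharp enough; this is exactly the step that uses the full strength of the degree hypothesis $d > (2g-1)r$, while everything else is routine Riemann--Roch and semistability.
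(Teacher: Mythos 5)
Your proof is correct and follows essentially the same route as the paper: reduce to the Harder--Narasimhan graded pieces of the subsheaf generated by $V'$, then bound $h^0$ of each piece by Riemann--Roch with vanishing of $h^1$ in the high-slope case and by Clifford's inequality in the intermediate range, using semistability of $E$ to control the slopes. The only differences are cosmetic (you pass to the saturation and organize the trichotomy by slope, while the paper splits cases according to the size of $h^0(E^i)$), and your verification that the Clifford bound suffices when $d/r>2g-1$ matches the paper's computation.
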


\begin{proof}
Let $E'$ be a semistable vector bundle of degree $d'$ and
rank $r'$. If $d'>(2g-2)r'$, then
\begin{equation}
  \label{eq:case1}
  h^0(E')=d'-r'(g-1)
\end{equation}
because $h^1(E')=h^0(K_X\otimes E'^\vee)$ (Serre
duality), and this is zero
since $K_X\otimes E'^\vee$ is a semistable vector bundle of
negative degree. On the other hand, 
if $0\leq d' \leq (2g-2)r'$, then by Clifford's 
theorem (see, for instance, \cite[Theorem 2.1]{BGN}),
\begin{equation}
  \label{eq:case2}
  h^0(E')\leq \frac{d'}{2}+r'\, .
\end{equation}
Therefore, if $h^0(E')>(g-1)r'+r'$, then we must be in
the first case, and hence
\begin{equation}
  \label{eq:high}
  d'=h^0(E')+r'(g-1) > 2r'(g-1)+r'
\end{equation}
if $h^0(E')>(g-1)r'+r'$.

Let $E^i$, $1\,\leq\, i\,\leq\,\ell$, be the successive
quotients of the Harder--Narasimhan
filtration of $E^{V'}$. We have
$$
\dim V'\leq h^0(E^{V'}) \leq \sum h^0(E^i)
$$
for all $i$. Denote $r_i=\rk E^i$. If
$h^0(E^i)\leq (g-1)r_i+r_i$, then applying
\eqref{eq:case1} to $E$, we have
$$
r\cdot h^0(E^i) - r_i\cdot h^0(E) \,<\,
r (r_i(g-1)+r_i) - r_i (r (g-1)+r)\leq 0\, .
$$
On the other hand, if $h^0(E^i)> (g-1)r_i+r_i$,
then applying \eqref{eq:high} and \eqref{eq:case1} 
to $E^i$ and $E$ we have
$$
r\cdot h^0(E^i) - r_i\cdot h^0(E)  = 
r (d^i-r_i(g-1))-r_i(d-r (g-1)) \leq 0
$$
by the stability condition of $E$.
Therefore
$$
r \dim V'-\rk E^{V^i}h^0(E) \leq
\sum \big(r h^0(E^i) - r_i h^0(E)\big) \leq
0\, .
$$
This completes the proof of the lemma.
\end{proof}

For a fixed holomorphic line bundle $\xi$ on $X$, 
sending any orthogonal bundle 
$\varphi:E\otimes E\too L$
to the orthogonal bundle
$(E\otimes\xi)\otimes (E\otimes\xi)\too L\otimes\xi^{\otimes 2}$ 
given by $\varphi$
we obtain an isomorphism between the corresponding moduli
spaces. The pull-back of the projective Picard bundle
under this isomorphism 
is the \emph{$\xi$-twisted projective Picard bundle}, 
namely the projective bundle whose fiber over
a point corresponding to $(E,\varphi)$ is canonically
isomorphic to $\PP(H^0(E\otimes\xi))$. 
Using this isomorphism, we have the following corollary.

\begin{corollary}
Assume that
$g(X)\,>\,n$ if $r=2n+1$ is odd, and $g(X)\,>\,n+1$ if
$r=2n$ is even. Let $\xi$ be a holomorphic line bundle of
degree $m$. Assume also $d+rm>(2g-2)r+r$.
Then, the $\xi$-twisted projectivized Picard bundle $P^\PGL$
over the moduli space $\SM^0_L$
is stable with respect to the unique polarization
of $\SM^0_L$.
\end{corollary}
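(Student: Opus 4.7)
The plan is to deduce this corollary immediately from Theorem \ref{thm.} by means of the twisting isomorphism described in the paragraph preceding the corollary. For a line bundle $\xi$ of degree $m$, sending
$(E,\varphi\,:\,E\otimes E\too L)$ to
$(E\otimes\xi,\, \varphi\otimes \id_{\xi^{\otimes 2}}\,:\,(E\otimes\xi)\otimes(E\otimes\xi)\too L\otimes\xi^{\otimes 2})$ defines an isomorphism of moduli spaces
$$
\Phi_\xi\,:\, \SM_L\,\stackrel{\sim}{\too}\, \SM_{L\otimes\xi^{\otimes 2}}\, .
$$
The rank is unchanged, while the degree becomes $d+rm$. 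Since the automorphism group of an orthogonal bundle is preserved under the operation of tensoring by a line bundle, $\Phi_\xi$ restricts to an isomorphism $\SM^0_L\,\stackrel{\sim}{\too}\, \SM^0_{L\otimes\xi^{\otimes 2}}$.

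First I would check that the pullback $\Phi_\xi^* P^\PGL$ of the ordinary projectivized Picard bundle on $\SM^0_{L\otimes\xi^{\otimes 2}}$ coincides with the $\xi$--twisted projectivized Picard bundle on $\SM^0_L$. This is built into the definitions, because the fiber of $\Phi_\xi^* P^\PGL$ over a point $(E,\varphi)$ is $\PP(H^0(X,\,E\otimes\xi))$, which is precisely the prescribed fiber of the $\xi$--twisted Picard bundle. Next I would apply Theorem \ref{thm.} to the moduli space $\SM^0_{L\otimes\xi^{\otimes 2}}$: the genus hypothesis depends only on $r$ and is therefore unchanged, while the degree hypothesis of the theorem requires $d+rm>(2g-2)r+r$, which is exactly what is assumed here. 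The theorem then asserts that the ordinary projectivized Picard bundle on $\SM^0_{L\otimes\xi^{\otimes 2}}$ is stable.

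Finally, since $\Pic(\SM^0_L)\,=\,\Pic(\SM^0_{L\otimes\xi^{\otimes 2}})\,=\,\ZZ$, the isomorphism $\Phi_\xi$ identifies the unique polarizations on the two sides up to a positive rational multiple. Stability of a rational projective bundle is preserved under pullback along an isomorphism of normal projective varieties that matches polarizations in this way, so we conclude that the $\xi$--twisted projectivized Picard bundle on $\SM^0_L$ is stable. The argument is entirely a matter of reduction by twisting, so there is no substantive obstacle; the only points requiring verification are the bookkeeping of the degree shift and the identification of fibers under $\Phi_\xi$, both of which are routine.
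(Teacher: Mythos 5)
Your proposal is correct and follows exactly the route the paper intends: the paper proves the corollary by the same twisting isomorphism $(E,\varphi)\mapsto (E\otimes\xi,\varphi)$ described in the preceding paragraph, under which the degree becomes $d+rm$ and the $\xi$-twisted Picard bundle is the pullback of the ordinary one, so Theorem \ref{thm.} applies directly. Your bookkeeping of the degree shift and the identification of fibers is the same (essentially tautological) verification the paper leaves implicit.
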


In \cite{BG}, we introduced Hecke transform for symplectic
bundles. In the symplectic case we started with a $(0,n)$--stable
symplectic bundle, performed a Hecke transform with an isotropic 
subspace of dimension $n$, and obtained a stable symplectic bundle.

In the odd orthogonal case, i.e., when the group is $\GO(2n+1,\CC)$, 
we cannot choose a subspace of half dimension.
To obtain an orthogonal bundle in this case, we have to
start with a singular bilinear form $(F,F\otimes F\too M)$ 
(Proposition \ref{prop:odd1}), 
which does not come from a principal bundle. 
The naive approach would be to start with a $(0,n)$--stable
bundle as in Proposition \ref{prop:odd1} to obtain an
orthogonal bundle. The difficulty is that not much
is known about the properties of the moduli space of
bilinear forms with singularities like this
(neither smoothness nor dimension is known), and in
particular, we do not know if the set of $(0,n)$--stable
bundles is dense.

The strategy is then to perform two Hecke transforms
instead of just one: we start with an $(n,n)$--stable orthogonal
bundle. A Hecke transform with respect to a subspace of
dimension $n$ will produce a singular (in the odd case)
bilinear bundle
$(F,F\otimes F\too M$, which is 
$(0,n)$-stable (Proposition \ref{prop:odd2}), and then we perform
a second Hecke transform (see Proposition \ref{prop:odd1}) 
to get a stable orthogonal bundle. Therefore, 
we have to prove that the set of $(n,n)$--stable orthogonal bundles is
dense (Proposition \ref{nnstable}).

Another difficulty in the orthogonal case
is that the determinant of the universal vector subbundle $\cS$
on the Grassmannian parameterizing rank $n$ isotropic subspaces in
$\CC^{2n}$ (endowed with the standard orthogonal form) is
not a primitive element of the Picard group
(see Proposition \ref{stables}). 
Because of this, the proof of
Theorem \ref{thm.} is longer than in the symplectic
case, where the determinant of the analogous
universal bundle is a generator of the Picard group.

\end{document}